\documentclass[11pt,a4paper]{amsart}

\usepackage[latin1]{inputenc}
\usepackage{amsmath,amsfonts,amssymb,resizegather}
\usepackage{enumerate}
\usepackage[
      colorlinks=true,    
      urlcolor=blue,    
      menucolor=blue,    
      linkcolor=blue,    
      bookmarks=true,    
      bookmarksopen=true,    
      hyperfootnotes=false,    
      pdfpagemode=UseOutlines    
]{hyperref}

\newtheorem{theorem}{Theorem}[section]
\newtheorem{lemma}[theorem]{Lemma}

\newtheorem{corollary}[theorem]{Corollary}

\newtheorem{definition}{Definition}
\theoremstyle{definition}

\numberwithin{equation}{section}
\numberwithin{theorem}{section}
\numberwithin{definition}{section}

\newcommand{\R}{\mathbb{R}}
\newcommand{\Z}{\mathbb{Z}}
\newcommand{\Q}{\mathbb{Q}}
\newcommand{\C}{\mathbb{C}}

\newcommand{\cH}{\mathcal{H}}
\newcommand{\cL}{\mathcal{L}}
\newcommand{\cM}{\mathcal{M}}
\newcommand{\z}{\boldsymbol{z}}
\newcommand{\x}{\boldsymbol{x}}

\newcommand{\bo}[1]{\boldsymbol{#1}}
\newcommand{\mc}[1]{\mathcal{#1}}

\def\Vol{\textup{Vol}}
\def\lg{\left\lbrace}
\def\rg{\right\rbrace}

\newcommand{\M}{M_1}
\newcommand{\Oseen}{\mathcal{O}}
\newcommand{\ks}{k}
\newcommand{\e}{e}
\newcommand{\n}{n}
\newcommand{\en}{n}
\newcommand{\m}{m}
\newcommand{\de}{d}

\newcommand{\kbar}{\overline{\ks}}
\newcommand{\Qbar}{\overline{\Q}}
\newcommand{\td}{\text{d}}

 \author{Fabrizio Barroero}
 \email{barroero@math.tugraz.at}

\address{Institut f\"ur Mathematik A\\
Technische Universit\"at Graz\\
Steyrergasse 30, A-8010 Graz\\
Austria}

\title{Counting algebraic integers of fixed degree and bounded height}

\date{\today}

\thanks{F. Barroero is supported by the Austrian Science Foundation (FWF) project W1230-N13.}

\subjclass[2010]{Primary 11G50, 11R04.}

\keywords{Heights, algebraic integers, counting}

\begin{document}

\begin{abstract}
Let $\ks$ be a number field. For $\mathcal{H}\rightarrow \infty$, we give an asymptotic formula for 
the number of algebraic integers of absolute Weil height bounded by $\mathcal{H}$ and fixed degree over $\ks$. 
\end{abstract}

\maketitle

\section{Introduction} \label{intro}

Let $\ks$ be a number field of degree $\m$ over $\Q$.
We count the number of algebraic integers $\beta$ of degree $\e$ over $\ks$ and bounded height. 
Here and in the rest of the article, by height we mean the multiplicative height $H$ on the affine space $\Qbar^\n$ (see \cite{BombGub}, 1.5.6). 

For positive rational integers $\n$ and $\e$, and a fixed algebraic closure $\kbar$ of $\ks$, 
let
$$
\ks(\n,\e)=\{\bo{\beta}\in \kbar^\n : [\ks(\bo{\beta}):\ks]=\e \},
$$
where $\ks(\bo{\beta})$ is the field obtained by adjoining all the coordinates of $\bo{\beta}$ to $\ks$.
By Northcott's Theorem \cite{Northcott1949} any subset of $\ks(\n,\e)$ of uniformly bounded height is finite.
Therefore, for any subset $S$ of $\ks(\n,\e)$ and $\cH> 0$, we may introduce the following counting function 
$$
N(S,\cH)= |\lg \bo{\beta}\in S: H(\bo{\beta})\leq \cH  \rg|.
$$
The counting function $N(\ks(\n,\e),\cH)$ has been investigated by various people.
The best known and one of the earliest is a result of Schanuel \cite{Schanuel1979} who gave an asymptotic formula for 
$N(\ks(\n,1),\cH)$. The first who dropped the restriction of the coordinates to lie in a fix number field was Schmidt. 
In \cite{Schmidt1993}, he found upper and lower bounds for $N(\ks(\n,\e),\cH)$  and in \cite{schmidt1995} he gave an asymptotic 
formula for $N(\Q(\n,2),\cH)$. Shortly afterwards, Gao \cite{Gao1995} found the asymptotics
for  $N(\Q(\n,\e),\cH)$, provided $\n>\e$.
Later Masser and Vaaler \cite{Masser2007} established an asymptotic estimate for $N(\ks(1,\e),\cH)$. 
Finally, Widmer \cite{Widmer2009} proved an asymptotic formula for $N(\ks(\n,\e),\cH)$ for arbitrary number fields $\ks$, provided $\n>5\e/2+5+2/\m\e$. 
However, for general $\n$ and $\e$ even the correct order of magnitude for $N(\ks(\n,\e),\cH)$ remains unknown.

In this article we are interested in counting integral points, i.e., points $\bo{\beta}\in \kbar^n$, whose coordinates are algebraic integers.
Let $\Oseen_\ks$ and $\Oseen_{\kbar}$ be, respectively, the ring of algebraic integers in $\ks$ and $\kbar$. We introduce
$$
\Oseen_\ks(\n,\e)=\ks(\n,\e)\cap \Oseen_{\kbar}^\n=\{\bo{\beta}\in \Oseen_{\kbar}^\n:[\ks(\bo{\beta}):\ks]=\e\}.
$$
Possibly, the first asymptotic result (besides the trivial cases $\Oseen_\Q(\n,1)=\Z^\n$) can be found in Lang's book  \cite{Lang1983}.
Lang states, without proof,  
\begin{equation*}
N(\Oseen_\ks(1,1),\cH)=\gamma_\ks \cH^\m  \left( \log \cH  \right)^q + O\left(    {\cH}^{\m} \left( \log  \mc{H}  \right)^{q-1} \right),
\end{equation*}
where $m=[\ks:\Q]$, $q$ is the rank of the unit group of the ring of integers $\Oseen_\ks$, and $\gamma_\ks$ is an unspecified positive constant, depending on $\ks$. 
More recently,  
Widmer \cite{Widmer2013} established the following asymptotic formula 
\begin{alignat}1\label{Okne}
N(\Oseen_\ks(\n,\e),\cH)=\sum_{i=0}^{t} D_i\cH^{\m\e\n}\left(\log \cH^{\m\e \n}\right)^i+O\left(\cH^{\m\e\n-1}(\log \cH)^{t}\right),
\end{alignat}
provided $\e=1$ or $\n>\e+C_{\e,\m}$, for some explicit $C_{\e,\m}\leq 7$.
Here $t=\e(q+1)-1$, and the constants $D_i=D_i(\ks,\n,\e)$ are explicitly given. 
Widmer's result is fairly specific in the sense that he works only with the absolute multiplicative Weil height $H$.
On the other hand, the methods used in \cite{Widmer2013} are quite general and powerful, and can probably be applied
to handle other heights (such as the heights used by Masser and Vaaler in \cite{Masser2007} to deduce their main result).
As mentioned in \cite{Widmer2013} this might lead to multiterm expansions as in (\ref{Okne}) for $N(\Oseen_\ks(1,\e),\cH)$.

However, for the moment, such generalizations of (\ref{Okne}) are not available, and thus the work \cite{Widmer2013} does not provide any results in the case $\n=1$ and $\e>1$.

But Chern and Vaaler in \cite{Chern2001}, proved an asymptotic formula for the 
number of monic polynomials in $\Z[x]$ of given degree and bounded Mahler measure. Theorem 6 of \cite{Chern2001} immediately implies the following
result 
\begin{equation}\label{chern}
N(\Oseen_\Q(1,\e),\cH)= C_{\e} \cH^{\e^2}+  O\left(    \mc{H}^{\e^2-1} \right),
\end{equation}
for some explicitly given positive real constant $C_\e$. Theorem \ref{thmint}
extends Chern and Vaaler's result to arbitrary ground fields $\ks$. 

For positive rational integers $\e$ we define
$$
C_{\R,\e} = 2^{\e-M} \left( \prod_{l=1}^{M}\left( \frac{2l}{2l+1} \right)^{\e-2l} \right) \frac{e^M}{M!},
$$
with $M=\lfloor \frac{\e-1}{2} \rfloor$, and 
$$
C_{\C,\e} = \pi^\e \frac{\e^\e}{\left( \e! \right)^2}.
$$
And, finally, let
\begin{equation}\label{const}
 C^{(\e)}_{\ks}= \frac{\e^{2q+1}  2^{s\e} \m^{q}}{q!\left( \sqrt{|\Delta_\ks|}\right)^{\e}}C_{\R,\e}^r C_{\C,\e}^s,
\end{equation}
where $\m=[\ks:\Q]$, $r$ is the number of real embeddings of $\ks$, $s$ the number of pairs of complex conjugate embeddings, $q=r+s-1$, and 
$\Delta_\ks$ denotes the discriminant of $\ks$. As usual, here and in the rest of this article, the empty product is understood to be 1.

For non-negative real functions $f(X),g(X),h(X)$ and $X_0\in \R$  we write 
$f(X)=g(X)+O(h(X))$ as $X\geq X_0$ tends to infinity if there is $C_0$ such that $|f(X)-g(X)|\leq C_0 h(X)$ for all $X \geq X_0$.

\begin{theorem}\label{thmint}
Let $\e$ be a positive integer, and let $\ks$ be a number field of degree $m$ over $\Q$.
Then, as $\cH\geq 2$ tends to infinity, we have
\begin{equation*}\label{mainest}
N(\Oseen_\ks(1,\e),\cH)=C^{(\e)}_{\ks}\mc{H}^{\m\e^2} \left( \log  \mc{H} \right)^{q} +\left\lbrace
\begin{array}{ll}
O\left(    \mc{H}^{\m\e^2} \left( \log  \mc{H}  \right)^{q-1} \right),  & \mbox{ if $q\geq 1$, }\\
O\left(\mc{H}^{\e(\m\e-1)} \mc{L}  \right), & \mbox{ if $q=0$, }
\end{array} \right.
\end{equation*}
where $\mc{L}=\log \cH$ if $(m,e)=(1,2)$ and 1 otherwise.
The implicit constant in the error term depends only on $\m$ and $\e$.
\end{theorem}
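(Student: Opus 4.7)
The plan is to parametrise each $\beta \in \Oseen_\ks(1,\e)$ by its minimal polynomial over $\ks$ and then to count lattice points in a region of $\R^{\m\e}$. Each such $\beta$ is a root of a unique monic irreducible $f \in \Oseen_\ks[x]$ of degree $\e$, and each such $f$ in turn contributes exactly $\e$ roots to $\Oseen_\ks(1,\e)$. Applying the product formula to $\beta$, which has degree $\m\e$ over $\Q$, and using integrality of $\beta$ to kill the non-archimedean contribution, one obtains
$$
H(\beta)^{\m\e} \;=\; \prod_{\sigma : \ks \hookrightarrow \C} M(\sigma(f)) \;=\; \prod_{i=1}^{r} M(\sigma_i(f)) \cdot \prod_{j=1}^{s} M(\sigma_{r+j}(f))^2,
$$
so, up to the factor $\e$ and up to discarding reducible $f$, the task reduces to counting monic $f \in \Oseen_\ks[x]$ of degree $\e$ satisfying this inequality with right-hand side $\cH^{\m\e}$.

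Via the Minkowski embedding, $\Oseen_\ks$ sits in $\R^r \times \C^s$ as a lattice of covolume $2^{-s}\sqrt{|\Delta_\ks|}$, hence the monic polynomials of degree $\e$ in $\Oseen_\ks[x]$ form a lattice $\Lambda$ in $(\R^r \times \C^s)^\e \cong \R^{\m\e}$ of covolume $(2^{-s}\sqrt{|\Delta_\ks|})^\e$, once a monic polynomial is identified with the tuple of its $\e$ non-leading coefficients. The counting region $R(\cH) \subset \R^{\m\e}$ is cut out by $\prod_i M(f_i) \prod_j M(g_j)^2 \leq \cH^{\m\e}$, where $f_i \in \R^\e$ and $g_j \in \C^\e$ encode the images of $f$ under the real and complex embeddings. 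To evaluate $\Vol R(\cH)$ I use the Chern--Vaaler volume formulas $\Vol\{f \in \R^\e \text{ monic} : M(f) \leq T\} = C_{\R,\e} T^\e$ and $\Vol\{g \in \C^\e \text{ monic} : M(g) \leq T\} = C_{\C,\e} T^{2\e}$, substituting $u_\sigma = \log M(\sigma(f))$ and rescaling the complex logarithms by $2$ to normalise the exponential weights. This reduces the computation to the gamma-type integral
$$
\int_{w_k \geq 0,\; \sum_k w_k \leq \m\e \log \cH} \exp\Big(\e \sum_k w_k\Big)\, dw
$$
in $r+s = q+1$ variables, whose leading term is $\tfrac{(\m\e)^q}{\e \cdot q!}\cH^{\m\e^2}(\log \cH)^q$ with a sub-leading error $O(\cH^{\m\e^2}(\log \cH)^{q-1})$. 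Bookkeeping of the constants and the factor $\e$ (for $\e$ roots per polynomial) then yields exactly $C^{(\e)}_\ks \cH^{\m\e^2}(\log \cH)^q$.

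The passage from volume to lattice-point count is handled by a Davenport--Widmer type counting principle for regions with Lipschitz-parametrisable boundary, giving
$$
\#(\Lambda \cap R(\cH)) = \frac{\Vol R(\cH)}{(2^{-s}\sqrt{|\Delta_\ks|})^\e} + (\text{boundary error}).
$$
Two auxiliary estimates remain. First, one must check that reducible monic $f \in \Oseen_\ks[x]$ of degree $\e$ in $R(\cH)$ contribute only to the error: any such $f$ factors as $f = gh$ with $1 \leq \deg g \leq \lfloor \e/2 \rfloor$, and summing over such factorisations saves at least one full factor $\cH^\m$, which is absorbed by the stated error terms. Second, a Lipschitz parametrisation of $\partial R(\cH)$ must be produced, which follows from the regularity of the Mahler measure as a function of the coefficients together with the fibration of $R(\cH)$ by the tuple $(M(\sigma(f)))_\sigma$.

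The main obstacle will be a sufficiently sharp control of the boundary $\partial R(\cH)$, so that the lattice-point counting error is of size at most $\cH^{\m\e^2 - 1}(\log \cH)^q$ and is therefore dominated by the sub-leading volume term when $q \geq 1$. The cases $q = 0$ are more delicate: no logarithm is present in the main term, so the entire error structure migrates to the boundary and to the reducibility count, and the extra factor $\log \cH$ that appears precisely when $(\m,\e) = (1,2)$ must be tracked as a genuine effect (presumably coming from the degeneration of the level set $M(f) = 1$ for real quadratics, where it contains a positive-dimensional family of polynomials with roots on the unit circle, producing a logarithmic inflation of the boundary tube invisible in larger cases).
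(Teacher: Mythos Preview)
Your overall strategy matches the paper's: parametrise $\beta$ by its monic minimal polynomial over $k$, relate $H(\beta)^{e}$ to $\prod_\sigma M(\sigma f)^{d_\sigma}$, embed $\Oseen_\ks^{\,e}$ as a lattice in $\R^{me}$, apply a geometry-of-numbers counting principle to the region $R(\cH)$, compute the volume via Chern--Vaaler, and discard reducible $f$ by a dyadic argument on the factorisation $f=gh$. Two points need correction, and one technical choice differs from the paper.

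First, the Chern--Vaaler volumes $\Vol\{M_1\le T\}$ over $\R$ and $\C$ are \emph{polynomials} in $T$ of degree $e$, not the pure monomials $C_{\R,e}T^{e}$ and $C_{\C,e}T^{e}$ you write; the lower-order terms feed into the error and must be carried along. The paper does this via an inductive computation of $\Vol(\widetilde Z(T))$ as a polynomial in $T^{1/2}$ and $\log T$, rather than your logarithmic change of variables; your substitution would work for the leading term but loses the sub-leading structure.

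Second, and more importantly, you misattribute the extra $\log\cH$ in the case $(m,e)=(1,2)$. It does \emph{not} come from the boundary of $R(\cH)$; the paper's lattice-count error for $q=0$ is $O(\cH^{me-1})$ with no logarithm (Theorem~\ref{mainthm}). The extra logarithm arises in the reducibility sieve: when $(m,e)=(1,2)$ the reducible $f=(X-a)(X-b)$ with $a,b\in\Z$ and $M(f)\le\cH^{2}$ number $\asymp \cH^{2}\log\cH$ (dyadically split on $\max(1,|a|)\in[2^{l-1},2^{l})$ and sum over $l$), and this is exactly the stated error $\cH^{e(me-1)}\log\cH$. Your speculation about a degenerate level set $\{M=1\}$ is off target.

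On the counting principle itself: rather than producing a Lipschitz parametrisation of $\partial R(\cH)$, the paper shows that the family $\{R(\cH)\}_{\cH}$ is \emph{semialgebraic} (writing the Mahler-measure condition polynomially in the roots and projecting via Tarski--Seidenberg) and then invokes the o-minimal lattice-point theorem of Barroero--Widmer. This sidesteps the regularity issue you flag, since $M$ is not differentiable where a root lies on the unit circle, and delivers the error $\sum_{j<me} V_j(R(\cH))$ without any boundary parametrisation. The $V_j$ are then bounded by replacing $M_1$ with the cruder $N(\x)=\gamma|(1,\x)|_\infty$.
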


Let us mention two simple examples. The number of algebraic integers $\alpha$ quadratic over $\Q(\sqrt{2})$ with $H(\alpha)\leq \cH$ is 
$$
32\cH^{8} \log \cH+O(\cH^8).
$$
In case $e=3$, we have
$$
108\sqrt{2} \cH^{18}\log \cH + O(\cH^{18})
$$
algebraic integers $\alpha$ cubic over $\Q(\sqrt{2})$ with $H(\alpha)\leq \cH$.

Our approach is similar to the one used to obtain (\ref{chern}) above, because we count monic polynomials in $\Oseen_\ks[X]$, but this is not a straightforward generalization of Theorem 6 of \cite{Chern2001}. In fact, in \cite{Chern2001} the estimate on the number of monic polynomials in $\Z[x]$ is obtained from a counting lattice points theorem, which is formulated only for the standard lattice $\Z^n$ (\cite{Chern2001}, Lemma 24).
Our proof relies on a new counting theorem for points of an arbitrary lattice in definable sets in an o-minimal structure \cite{Barroero2012}. Moreover, our proof is fairly short, and more straightforward than the approach of \cite{Widmer2013},
but to the expense that we do not get a multiterm expansion. 

In \cite{Masser2007}, Masser and Vaaler observed that the limit for $
\cH\rightarrow \infty$ of
$$
\frac{N(\ks(1,\e),\cH^{\frac{1}{\e}})}{N(\ks(\e,1),\cH)}
$$ 
is a rational number. Moreover, they asked if this can be extended to some sort of reciprocity law, i.e., whether
$$
\lim_{\cH \rightarrow \infty}\frac{N(\ks(\n,\e),\cH^{\frac{1}{\e}})}{N(\ks(\e,\n),\cH^{\frac{1}{\n}})}\in \Q.
$$
If we consider only the first term in (\ref{Okne}), and combine it with Theorem \ref{thmint} we see that 
$$
\lim_{\cH \rightarrow \infty} \frac{N(\Oseen_\ks(1,\e),\cH^{\frac{1}{\e}})}{N(\Oseen_\ks(\e,1),\cH)}=\e \left(\frac{C_{\R,\e}}{2^\e}\right)^r \left(\frac{C_{\C,\e}}{\pi^\e}\right)^s
$$
is a rational number depending only on $e$, $r$ and $s$. As Masser and Vaaler did, one can ask again whether 
$$
\lim_{\cH \rightarrow \infty}\frac{N(\Oseen_\ks(\n,\e),\cH^{\frac{1}{\e}})}{N(\Oseen_\ks(\e,\n),\cH^{\frac{1}{\n}})}\in \Q.
$$

\section{Counting monic polynomials} \label{sect2}

In this section we see how our problem translates to counting monic polynomials of fixed degree that assume a uniformly bounded value under a certain real valued function called $M^\ks$, defined using the Mahler measure.

Recall we fixed a number field $\ks$ of degree $\m$ over $\Q$ and $\Oseen_\ks$ is its ring of integers. Let $\sigma_1, \dots , \sigma_r$ be the real embeddings of $\ks$ and $\sigma_{r+1}, \dots , \sigma_{\m}$ be the strictly complex ones, indexed in such a way that $\sigma_{j}=\overline{\sigma}_{j+s}$ for $j=r+1 , \dots , r+s$. Therefore, $r$ and $s$ are, respectively, the number of real and pairs of conjugate complex embeddings of $\ks$ and $\m=r+2s$. We put $d_i=1$ for $i=1, \dots ,r$ and $d_i=2$ for $i=r+1, \dots ,r+s$ and fix a positive integer $\e$. Let us recall the definition of the Mahler measure.

\begin{definition}
If $f=z_0X^\de+z_1X^{\de-1}+\cdots +z_\de \in \C[X]$ is a non-zero polynomial of degree $\de$ with roots $\alpha_1,\ldots , \alpha_\de$, the Mahler measure of $f$ is defined to be
\begin{equation*}
M(f)=|z_0| \prod_{i=1}^\de \max\lg 1,|\alpha_i|\rg.
\end{equation*}
Moreover, we set $M(0)=0$.
\end{definition}
We see $M$ as a function $\C[X] \rightarrow [0,\infty)$ and define
\begin{equation*}
\begin{array}{cccl}\label{defmk}
M^\ks:& \ks[X]& \rightarrow & [0,\infty) \\
 & f  & \mapsto & \prod_{i=1}^{r+s}M(\sigma_i(f))^{\frac{d_i}{\m}},
\end{array}
\end{equation*}
where $\sigma_i$ acts on the coefficients of $f$. Note that, for every $\alpha \in \Oseen_\ks$, 
\begin{equation}\label{mahlheight}
M^\ks(X-\alpha)=\prod_{i=1}^{r+s}\max \lg 1,|\sigma_i(\alpha)| \rg^{\frac{d_i}{\m}}=H(\alpha).
\end{equation}
In fact, if $\alpha \in \Oseen_\ks$ then $|\alpha|_v\leq 1$ for every non-archimedean place $v$ of $\ks$.

Moreover, the Mahler measure is multiplicative by definition, i.e.,
$$
M(fg)=M(f)M(g),
$$
and one can see that
$$
M^\ks(fg)=M^\ks(f)M^\ks(g),
$$
for every $f,g \in \ks[X]$.  

For some positive integer $e$ and some $\cH>0$, we define $\cM^\ks(e,\cH)$ to be the set of monic $f \in \Oseen_\ks[X]$ of degree  $e$ and $M^\ks(f)\leq \cH$.
It is easy to see that $\cM^\ks(e,\cH)$ is finite for all $\cH$. The following theorem gives an estimate for its cardinality.

\begin{theorem}\label{mainthm}
For every $\cH_0>1$ there exists a $D_0$ such that, for every $\cH \geq \cH_0$,
\begin{equation}\label{asym}
\left| \left|\cM^\ks(e,\cH)\right|- \frac{C^{(\e)}_{\ks}}{\e^{q+1}}\mc{H}^{\m\e} \left( \log  \mc{H} \right)^{q} \right| \leq \left\lbrace
\begin{array}{ll}
D_0    \mc{H}^{\m\e} \left( \log  \mc{H}  \right)^{q-1} ,  & \mbox{ if $q\geq 1$, }\\
D_0 \mc{H}^{\m\e-1}  , & \mbox{ if $q=0$, }
\end{array} \right.
\end{equation}
where $q=r+s-1$. The constant $D_0$ depends only on $\cH_0$, $\m$ and $\e$.
\end{theorem}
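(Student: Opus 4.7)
My plan is to identify $|\mc{M}^\ks(\e,\cH)|$ with a lattice-point count in a semi-algebraic set and invoke the o-minimal counting theorem of \cite{Barroero2012}. A monic polynomial $f=X^{\e}+a_1X^{\e-1}+\cdots+a_{\e}\in\Oseen_\ks[X]$ is determined by its coefficient vector $(a_1,\dots,a_{\e})\in\Oseen_\ks^{\e}$, and the coordinatewise Minkowski embedding turns $\Oseen_\ks^{\e}$ into a full lattice $\Lambda\subset\R^{\m\e}$ of covolume $2^{-s\e}|\Delta_\ks|^{\e/2}$. Via the natural splitting $\R^{\m\e}\cong(\R^{\e})^r\times(\C^{\e})^s$, the image of $f$ is the tuple $(\sigma_1(f),\dots,\sigma_{r+s}(f))$ of real or complex monic polynomials of degree $\e$, and after raising to the $\m$-th power, the condition $M^\ks(f)\le\cH$ reads $\prod_i M(\sigma_i(f))^{d_i}\le\cH^{\m}$. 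Since the scalar Mahler measure is a semi-algebraic function of the coefficients (it is a continuous symmetric function of the roots, expressible semi-algebraically), the resulting sublevel set $\mc{S}_\cH\subset\R^{\m\e}$ is semi-algebraic, and $|\mc{M}^\ks(\e,\cH)|=|\Lambda\cap\mc{S}_\cH|$.

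Applying the o-minimal counting theorem yields
\begin{equation*}
|\Lambda\cap\mc{S}_\cH|=\frac{\Vol(\mc{S}_\cH)}{\det\Lambda}+O\bigl(\text{lower-dimensional face volumes of }\mc{S}_\cH\bigr),
\end{equation*}
with implicit constants depending only on $\m,\e$ and $\cH_0$. I then evaluate $\Vol(\mc{S}_\cH)$ by Fubini, using the one-factor Mahler sublevel volumes
\begin{equation*}
V_{\R,\e}(T)=C_{\R,\e}\,T^{\e}+O(T^{\e-1}),\qquad V_{\C,\e}(T)=C_{\C,\e}\,T^{2\e}+O(T^{2\e-1})\quad(T\ge 1),
\end{equation*}
which are computed in the Chern-Vaaler manner via a change of variables to the elementary symmetric functions of the roots. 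Substituting $u_i=\log M(\sigma_i(f))$, the leading part of $\Vol(\mc{S}_\cH)$ becomes a weighted simplex integral
\begin{equation*}
C_{\R,\e}^{r}C_{\C,\e}^{s}\,\e^{q+1}\Bigl(\prod_i d_i\Bigr)\int_{\sum (d_i/\m)u_i\le\log\cH,\ u_i\ge 0}\exp\Bigl(\e\sum_i d_i u_i\Bigr)\,du_1\cdots du_{r+s}.
\end{equation*}
A Laplace-type evaluation concentrated near the hyperplane $\sum(d_i/\m)u_i=\log\cH$ produces leading behaviour $\frac{C_{\R,\e}^{r}C_{\C,\e}^{s}\,\e^{q}\,\m^{q}}{q!}\,\cH^{\m\e}(\log\cH)^{q}$, and dividing by $\det\Lambda=2^{-s\e}|\Delta_\ks|^{\e/2}$ reproduces exactly $(C^{(\e)}_\ks/\e^{q+1})\,\cH^{\m\e}(\log\cH)^{q}$.

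The main obstacle is the error term, for which three contributions must be combined: the lower-dimensional boundary volumes from the lattice-counting theorem, the $O(T^{-1})$ subleading corrections to $V_{\R,\e}$ and $V_{\C,\e}$, and the next-order terms of the simplex integral (each of which costs a factor of $\log\cH$). When $q\ge 1$, careful bookkeeping confirms that all three absorb into $O(\cH^{\m\e}(\log\cH)^{q-1})$. When $q=0$ (so $\ks$ is $\Q$ or imaginary quadratic) the simplex collapses to a single point and the main term carries no logarithm; here the error improves to $O(\cH^{\m\e-1})$, extracted from the sharper form of the Chern-Vaaler subleading term together with the boundary count. The subcase $(\m,\e)=(1,2)$ is genuinely exceptional: the single real factor $V_{\R,2}$ carries a logarithmic contribution near $M(f)=1$ which must be isolated by hand and accounts for the extra $\mc{L}=\log\cH$ in the statement.
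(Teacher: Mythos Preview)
Your overall strategy coincides with the paper's: embed $\Oseen_\ks^{\e}$ as a lattice in $\R^{\m\e}$ via Minkowski, identify $|\cM^\ks(\e,\cH)|$ with the lattice-point count in a semialgebraic sublevel set of the product of monic Mahler measures, invoke the o-minimal counting theorem of \cite{Barroero2012}, and feed in the Chern--Vaaler one-factor volumes. The paper computes the total volume by an induction on $q$ (its Lemma~5.2) rather than your logarithmic substitution and simplex integral, but the two routes reach the same leading term; either is fine. The paper is also more explicit about the projection error: it dominates $Z(T)$ by the max-norm set $Z'(T)$ and applies the volume lemma again to each coordinate projection, whereas you wave at ``careful bookkeeping''. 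That is a soft spot but not a hole.

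There is, however, a genuine confusion in your final paragraph. The statement you are proving (Theorem~\ref{mainthm}) has \emph{no} factor $\cL$; the error for $q=0$ is plainly $D_0\cH^{\m\e-1}$ regardless of $(\m,\e)$. The extra logarithm for $(\m,\e)=(1,2)$ appears only later, in Corollary~\ref{maincor} and Theorem~\ref{thmint}, and it comes from the sieve that removes \emph{reducible} monic polynomials: when $\e=2$ over $\Q$, the count of factorizations $f=gh$ with $\deg g=\deg h=1$ already contributes order $\cH\log\cH$, which matches the would-be error $\cH^{\m\e-1}$ only up to a logarithm. Your explanation---that $V_{\R,2}$ ``carries a logarithmic contribution near $M(f)=1$''---is incorrect: by Chern--Vaaler, $V_{\R,\e}(T)$ is a genuine polynomial in $T$ for all $T\ge1$, with no logarithmic piece. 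So drop that last sentence entirely; for Theorem~\ref{mainthm} nothing special happens at $(\m,\e)=(1,2)$, and your proposed ``isolation by hand'' is both unnecessary here and aimed at the wrong phenomenon.
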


Note that our constant $C^{(\e)}_{\ks}$ defined in (\ref{const}), is bounded if we fix $\m$ and $\e$ and we let $\ks$ vary among all number fields of degree $\m$. This implies that there exists a real constant $C^{(\m,\e)}$, depending only on $\m$ and $\e$, such that $ \left|\cM^\ks(e,\cH)\right|$ is bounded from above by
\begin{equation}\label{asym2}
C^{(\m,\e)}\mc{H}^{\m\e} \left( \log  \mc{H} +1 \right)^{q} ,
\end{equation}
for every $\cH \geq 1$.

We prove Theorem \ref{mainthm} later and for the rest of this section we derive Theorem \ref{thmint} from Theorem \ref{mainthm}. We follow the line of Masser and Vaaler \cite{Masser2007}.

Now we want to restrict to monic $f$ irreducible over $\ks$. Let $ \widetilde{\cM}^\ks(e,\cH)$ be the set of polynomials in $\cM^\ks(e,\cH)$ that are irreducible over $\ks$.

\begin{corollary}\label{maincor}
For every $\cH_0>1$ there exists an $F_0$ such that, for every $\cH \geq \cH_0$,
\begin{equation}\label{asym3}
\left| \left|\widetilde{\cM}^\ks(e,\cH)\right|- \frac{C^{(\e)}_{\ks}}{\e^{q+1}}\mc{H}^{\m\e} \left( \log  \mc{H} \right)^{q} \right| \leq \left\lbrace
\begin{array}{ll}
F_0    \mc{H}^{\m\e} \left( \log  \mc{H}  \right)^{q-1} ,  & \mbox{ if $q\geq 1$, }\\
F_0 \mc{H}^{\m\e-1} \cL , & \mbox{ if $q=0$, }
\end{array} \right.
\end{equation}
where $\mc{L}=\log \cH$ if $(m,e)=(1,2)$ and 1 otherwise.
The constant $F_0$ depends again only on $\cH_0$, $\m$ and $\e$.
\end{corollary}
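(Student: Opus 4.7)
The corollary follows once one bounds the number of \emph{reducible} monic polynomials in $\cM^\ks(e,\cH)$ by something absorbable in the error term of Theorem \ref{mainthm}. When $e = 1$ every monic polynomial is irreducible, so I assume $e \geq 2$.

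Any $f \in \cM^\ks(e,\cH) \setminus \widetilde{\cM}^\ks(e,\cH)$ factors in $\Oseen_\ks[X]$ as $f = gh$ with $g, h$ monic of degrees $e_1, e_2 \geq 1$ summing to $e$. Since $M^\ks$ is multiplicative and $M(\cdot) \geq 1$ on monic polynomials in $\C[X]$, the factors satisfy $M^\ks(g)M^\ks(h) = M^\ks(f) \leq \cH$ with both values in $[1,\cH]$. Picking a non-trivial binary factorization for each reducible $f$ yields an injection of the reducibles into the disjoint union, over partitions $e_1 + e_2 = e$ with $e_1, e_2 \geq 1$, of such pairs $(g,h)$. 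Hence
\begin{equation*}
\bigl|\cM^\ks(e,\cH)\setminus\widetilde{\cM}^\ks(e,\cH)\bigr| \leq \sum_{\substack{e_1 + e_2 = e \\ e_1, e_2 \geq 1}} \sum_{g} \bigl|\cM^\ks\bigl(e_2,\cH/M^\ks(g)\bigr)\bigr|,
\end{equation*}
the inner sum running over monic $g\in\Oseen_\ks[X]$ of degree $e_1$ with $M^\ks(g)\leq \cH$.

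I estimate the right-hand side by applying the uniform bound (\ref{asym2}) of Theorem \ref{mainthm} to $|\cM^\ks(e_2,\cdot)|$ and to the count of $g$, after partitioning the range of $M^\ks(g)$ dyadically into intervals $[2^k,2^{k+1})$. A short computation yields, for each partition with $e_1 \geq e_2$, a total contribution of order
\begin{equation*}
O\bigl(\cH^{\m e_1}(\log \cH + 1)^q\bigr) \text{ if } e_1 > e_2, \qquad O\bigl(\cH^{\m e/2}(\log \cH + 1)^{2q+1}\bigr) \text{ if } e_1 = e_2.
\end{equation*}
Since $e_2 \geq 1$ forces $e_1 \leq e - 1$ (and $e/2 \leq e - 1$ when $e \geq 2$), summing over the finitely many partitions gives $O(\cH^{\m(e-1)}(\log \cH)^q)$ when $e \geq 3$, and $O(\cH^\m(\log \cH)^{2q+1})$ when $e = 2$ (where the only partition is balanced).

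It remains to check that these majorants fit inside the error terms of Theorem \ref{mainthm}. For $q \geq 1$ the gain of a full factor of $\cH^\m$ over the required error $\cH^{\m e}(\log \cH)^{q-1}$ amply absorbs the surplus logarithms. For $q = 0$ one has $r + s = 1$, so $\m \in \{1,2\}$; inspecting each combination of $(\m,e)$ shows the reducible bound is $O(\cH^{\m e - 1})$ in every case except $(\m,e) = (1,2)$, where it is exactly $O(\cH \log \cH) = O(\cH^{\m e - 1}\cL)$, explaining the exceptional factor $\cL$ in the statement. The main obstacle is the careful bookkeeping of logarithmic factors in the dyadic sum, particularly in the balanced partition $e_1 = e_2$, which is precisely what forces the extra $\log \cH$ in the exceptional case $(\m,e) = (1,2)$.
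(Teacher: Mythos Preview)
Your proposal is correct and follows essentially the same approach as the paper's proof: dyadically partition the range of $M^\ks(g)$, apply the uniform bound (\ref{asym2}) to each factor, and treat the balanced and unbalanced degree splittings separately, with the case $(m,e)=(1,2)$ emerging as the only one where the reducibles force an extra logarithm. The only (inessential) difference is that in the unbalanced case $e_1>e_2$ you extract $(\log\cH)^q$ by noting the geometric sum is dominated by its top term, whereas the paper simply bounds both logarithmic factors by $(\log\cH+2)^q$ to get $(\log\cH)^{2q}$; either suffices.
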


\begin{proof}
For $e=1$ there is nothing to prove. Suppose $e>1$. We show that, up to a constant, the number of all monic reducible $f \in \Oseen_\ks[X]$ of degree $\e$ with $M^\ks(f)\leq \mc{H}$ is not larger than the right hand side of (\ref{asym}), except for the case $(m,e)=(1,2)$. 

Consider all $f=gh \in \cM^\ks(e,\cH)$ with $g,h\in \Oseen_\ks[X]$ monic of degree $a$ and $b$ respectively, with $0<a\leq b<\e$ and $a+b=e$.
We have $1 \leq M^\ks(g),M^\ks(h) \leq \cH$ because $g$ and $h$ are monic. Thus, there exists a positive integer $l$ such that $2^{l-1}\leq M^\ks(g)<2^l$. Note that $l$ must satisfy
\begin{equation}\label{ineqk}
1\leq l \leq \frac{\log \mc{H}}{\log 2}+1\leq 2 \log \mc{H} +1.
\end{equation}
Since $M^\ks$ is multiplicative,
$$
M^\ks(h)=\frac{M^\ks(f)}{M^\ks(g)}\leq 2^{1-l} \mc{H}.
$$
Using (\ref{asym2}) and noting that $2^l\leq 2 \cH$, we can say that there are at most
$$
C^{(\m,a)}\left(2^l\right)^{\m a } \left( \log  2^l+1 \right)^{q} \leq C^{(\m,a)}\left(2^l\right)^{\m a } \left( \log \cH +2 \right)^{q}  
$$
possibilities for $g$ and
$$
C^{(\m,b)}\left(2^{1-l} \mc{H}\right)^{\m b}\left( \log \left(2^{1-l} \mc{H} \right)+1 \right)^{q} \leq C^{(\m,b)}\left(2^{1-l} \mc{H}\right)^{\m b}\left( \log  \mc{H} +2 \right)^{q}
$$
possibilities for $h$. Therefore, we have at most
\begin{align} \label{red}
C' \mc{H}^{\m b} 2^{\m l(a-b)} \left(  \log \mc{H} +2 \right)^{2q}
\end{align}
possibilities for $gh$ with $M^\ks (gh)\leq \cH$ and $2^{l-1}\leq M^\ks(g)<2^l$, where $C'$ is a real constant. Since there are only finitely many choices for $a$ and $b$ we can take $C'$ to depend only on $\m$ and $\e$.

If $a=b=\frac{\e}{2}$ then (\ref{red}) is
$$
C'\mc{H}^{\m \frac{\e}{2}}  \left(  \log \mc{H} +2 \right)^{2q}.
$$
Summing over all $l$, $1\leq l \leq  \lfloor 2 \log \mc{H} \rfloor+1$ (recall (\ref{ineqk})), gives an extra factor $2\log \mc{H} +1 $. Therefore, when $a=b$, there are at most
$$
C'\mc{H}^{\frac{\m \e}{2}}\left(2 \log \mc{H} +2 \right)^{2q+1}
$$
possibilities for $f=gh$, with $M^\ks(f)\leq \cH$. If $(m,e)\neq (1,2)$, this has smaller order than the right hand side of (\ref{asym}), since $\m \e >2$ implies $\frac{\m \e}{2}< \m \e-1$. In the case $(m,e)= (1,2)$ we get $C' \cH \left(2 \log \mc{H} +2 \right)$ and we need an additional logarithm factor.

In the case $a <b$, summing $2^{\m l(a-b)}$ over all $l$, $1\leq l \leq  \lfloor 2 \log \mc{H} \rfloor+1 =L$, we get
$$
\sum_{l=1}^L \left(2^{\m(a-b)}\right)^l \leq \sum_{l=1}^{L}2^{-l}\leq 1.
$$
Thus, recalling $b\leq \e-1$, when $a<b$, there are at most 
$$
C'' \mc{H}^{\m(\e-1)} \left( \log \mc{H} +2 \right)^{2q}
$$
possibilities for $f=gh$, with $M^\ks(f)\leq \cH$, where again $C''$ depends only on $\m$ and $\e$. This is again not larger than the right hand side of (\ref{asym}).
\end{proof}

For the last step of the proof we link such monic irreducible polynomials with their roots.

\begin{lemma} \label{lemmapoly}

An algebraic integer $\beta$ has degree $\e$ over $\ks$ and $H( \beta ) \leq \cH$ if and only if it is a root of a monic irreducible polynomial $f \in \Oseen_\ks[X]$ of degree $\e$ with $M^\ks(f)\leq \cH^\e$

\end{lemma}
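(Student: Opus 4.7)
The plan is to reduce the statement to the single identity $M^\ks(f)=H(\beta)^\e$, where $f$ is the minimal polynomial of $\beta$ over $\ks$; this is the natural generalisation of (\ref{mahlheight}) from $\e=1$ to arbitrary $\e$, and once it is in hand both implications are immediate.

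For the forward direction, let $\beta$ be an algebraic integer with $[\ks(\beta):\ks]=\e$ and take $f\in\ks[X]$ to be its minimal polynomial. Since $\beta\in\Oseen_{\kbar}$, the polynomial $f$ has all its coefficients in $\Oseen_\ks$ and is monic of degree $\e$, and it is irreducible over $\ks$ by construction. For the converse, given a monic irreducible $f\in\Oseen_\ks[X]$ of degree $\e$, any root $\beta$ of $f$ is an algebraic integer (because $f$ is monic with coefficients in $\Oseen_\ks$) and satisfies $[\ks(\beta):\ks]=\e$ because $f$ is then, up to normalisation, its minimal polynomial over $\ks$. So everything comes down to relating $M^\ks(f)$ and $H(\beta)$.

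To prove $M^\ks(f)=H(\beta)^\e$, set $K=\ks(\beta)$, so $[K:\Q]=\m\e$. For each embedding $\sigma_i:\ks\hookrightarrow\C$, choose the $\e$ extensions $\tau_{i,1},\dots,\tau_{i,\e}:K\hookrightarrow\C$ of $\sigma_i$; the numbers $\tau_{i,j}(\beta)$ are exactly the roots of the polynomial $\sigma_i(f)$ obtained by applying $\sigma_i$ to the coefficients of $f$. Consequently
$$
M(\sigma_i(f))=\prod_{j=1}^\e\max\{1,|\tau_{i,j}(\beta)|\},
$$
and therefore
$$
M^\ks(f)^\m=\prod_{i=1}^{r+s}\prod_{j=1}^\e\max\{1,|\tau_{i,j}(\beta)|\}^{d_i}=\prod_{\tau:K\hookrightarrow\C}\prod_{j=1}^\e\max\{1,|\tau(\beta)|\},
$$
where the second equality uses the fact that each complex place of $\ks$ corresponds to a pair of conjugate embeddings, absorbed into the exponent $d_i=2$. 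On the other hand, since $\beta\in\Oseen_{\kbar}$, all non-archimedean local factors in the definition of the absolute Weil height are trivial, and so
$$
H(\beta)^{\m\e}=\prod_{\tau:K\hookrightarrow\C}\max\{1,|\tau(\beta)|\}.
$$
Comparing the two displays gives $M^\ks(f)^\m=H(\beta)^{\m\e}$, i.e.\ $M^\ks(f)=H(\beta)^\e$, which finishes the argument.

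The main conceptual content is the computation in the last paragraph; nothing is really hard, but one must handle carefully the relation between embeddings of $\ks$, their extensions to $K=\ks(\beta)$, and the exponents $d_i/\m$ appearing in the definition of $M^\ks$, and one must remember to invoke that $\beta$ is an algebraic integer precisely to kill the non-archimedean contributions in $H(\beta)$.
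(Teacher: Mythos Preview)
Your argument is correct in substance, but the first displayed computation contains a typographical slip: the right-hand side
\[
\prod_{\tau:K\hookrightarrow\C}\prod_{j=1}^{\e}\max\{1,|\tau(\beta)|\}
\]
should not carry the inner product $\prod_{j=1}^{\e}$; as written, the equality is false (it would yield $M^\ks(f)=H(\beta)^{\e^2}$). The comparison you make in the next line shows you meant $\prod_{\tau}\max\{1,|\tau(\beta)|\}$, and with that correction the proof goes through.

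Your route to the key identity $M^\ks(f)=H(\beta)^{\e}$ differs from the paper's. The paper argues more structurally: it first observes that $M^\ks$ is independent of the ground field, so that one has a well-defined $M^{\overline\Q}$ on $\overline\Q[X]$; then it factors $f=\prod_i(X-\alpha_i)$, applies multiplicativity and (\ref{mahlheight}) to each linear factor, and uses that conjugate algebraic numbers share the same height. You instead work directly from the definition of $M^\ks$ over $\ks$, identifying the roots of each $\sigma_i(f)$ with the images $\tau_{i,j}(\beta)$ under the extensions of $\sigma_i$ to $K=\ks(\beta)$, and then match the resulting double product against the archimedean expression for $H(\beta)^{\m\e}$. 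Your approach is a little more hands-on and avoids invoking the field-independence of $M^\ks$ (which the paper only sketches by analogy with the height); the paper's approach is cleaner once that independence is granted and makes the role of multiplicativity more transparent. Both are short and natural.
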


\begin{proof}
Suppose $f\in \Oseen_\ks[X]$ is a monic irreducible polynomial of degree $\e$ and $\beta$ is one of its roots, i.e., $\beta$ is an algebraic integer with $[\ks(\beta):\ks]=e$ and minimal polynomial $f$ over $\ks$. We claim that 
$$
M^k(f)=H(\beta)^\e.
$$

The function $M^k$ is independent of the field $\ks$ and we can define an absolute $M^{\overline{\Q}}$ over $\overline{\Q}[X]$ that, restricted to any $\ks[X]$, equals $M^\ks$. To see this one can simply imitate the proof of the fact that the Weil height is independent of the field containing the coordinates (see \cite{BombGub}, Lemma 1.5.2).
%
%
%

Suppose $f=(X-\alpha_1)\cdots (X-\alpha_\e)$. Since the $\alpha_i$ are algebraic integers, by (\ref{mahlheight}), we have
$$
M^{\overline{\Q}}(X-\alpha_i)=M^{\Q(\alpha_i)}(X-\alpha_i)=H(\alpha_i),
$$
and the $\alpha_i$ have the same height because they are conjugate (see \cite{BombGub}, Proposition 1.5.17). Moreover, by the multiplicativity of $M^\ks$ we can see that
$$
M^\ks(f)=M^{\overline{\Q}}(f)=\prod_{i=1}^e M^{\overline{\Q}}(X-\alpha_i)= H(\alpha_j)^\e,
$$
for any $\alpha_j$ root of $f$.
\end{proof}

Lemma \ref{lemmapoly} implies that $ N(\Oseen_\ks(1,\e),\cH)= e \left|\widetilde{\cM}^\ks(e,\cH^\e) \right|$ because there are $\e$ different $\beta $ with the same minimal polynomial $f$ over $\ks$. Therefore, by (\ref{asym3}), we have that for every $\cH_0>1$ there exists a $C_0$, depending only on $\cH_0$, $\m$ and $\e$, such that for every $\cH\geq \cH_0$,
\begin{equation*}
\left| N(\Oseen_\ks(1,\e),\cH) - C^{(\e)}_{\ks} \mc{H}^{\m\e^2} \left( \log  \mc{H} \right)^{q} \right| \leq \left\lbrace
\begin{array}{ll}
C_0    \mc{H}^{\m\e^2} \left( \log  \mc{H}  \right)^{q-1} ,  & \mbox{ if $q\geq 1$, }\\
C_0 \mc{H}^{\e(\m\e-1)} \cL , & \mbox{ if $q=0$, }
\end{array} \right.
\end{equation*}
where $\mc{L}=\log \cH$ if $(m,e)=(1,2)$ and 1 otherwise. We get Theorem \ref{thmint} by choosing $\cH_0=2$.

\section{A counting principle}

In this section we introduce the counting theorem that will be used to prove Theorem \ref{mainthm}. The principle dates back to Davenport \cite{Davenport1951} and was developed by several authors. In a previous work \cite{Barroero2012} the author and Widmer formulated a counting theorem that relies on Davenport's result and uses o-minimal structures. The full generality of Theorem 1.3 of \cite{Barroero2012} is not needed here as we are going to count lattice points in semialgebraic sets.
\begin{definition}
Let $N$, $M_i$, for $i=1, \dots ,N$, be positive integers. A semialgebraic subset of $\R^n$ is a set of the form
$$
\bigcup_{i=1}^{N}\bigcap_{j=1}^{M_i} \{  \x \in \R^n : f_{i,j}(\x) \ast_{i,j} 0 \},
$$
where $f_{i,j} \in \R[X_1, \dots , X_n]$ and the $\ast_{i,j}$ are either $<$ or $=$.
\end{definition}

A very important feature of semialgebraic sets is the fact that this collection of subsets of the Euclidean spaces is closed under projections. This is the well known Tarski-Seidenberg principle.

\begin{theorem}[\cite{Bierstone88}, Theorem 1.5]\label{tarski}
Let $A \in \R^{n+1}$ be a semialgebraic set, then $\pi(A)\in \R^n$ is semialgebraic, where $\pi :\R^{n+1}\rightarrow \R^n$ is the projection map on the first $n$ coordinates. 
\end{theorem}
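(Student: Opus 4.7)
The plan is to establish Theorem \ref{tarski} via quantifier elimination for real closed fields, specialized to a single existential quantifier. Since projection commutes with finite unions, and every semialgebraic subset of $\R^{n+1}$ is a finite union of basic pieces of the form $\bigcap_{j=1}^{M} \{f_j \ast_j 0\}$ with $f_j \in \R[X_1,\ldots,X_n,Y]$ and $\ast_j \in \{<,=\}$, it suffices to show the following: for any finite family $f_1,\ldots,f_k \in \R[\bo{x},Y]$ and any sign condition $\sigma \colon \{1,\ldots,k\} \to \{<,=,>\}$, the set
\[
\{\bo{x} \in \R^n : \exists y \in \R \text{ with } \mathrm{sign}(f_i(\bo{x},y)) = \sigma(i) \text{ for all } i\}
\]
is semialgebraic. (Observe that $\{f>0\} = \{-f<0\}$, so after this reduction every polynomial sign condition is admissible.)

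Viewing the $f_i$ as polynomials in $Y$ with coefficients in the ring $\R[\bo{x}]$, the key algebraic input is Hermite's theorem together with the theory of subresultants. For polynomials $P(Y), Q(Y) \in \R[\bo{x}][Y]$ with $P$ of fixed degree in $Y$, Hermite's construction expresses the Tarski query
\[
\mathrm{Tar}(P,Q) = \#\{y \in \R : P(y)=0,\ Q(y)>0\} - \#\{y \in \R : P(y)=0,\ Q(y)<0\}
\]
as the signature of an explicit symmetric matrix whose entries are polynomial combinations of the coefficients of $P$ and $Q$. The signature can in turn be read off from the signs of the principal minors (or, more efficiently, from the principal subresultant coefficients of $P$ and $P'Q$), which are themselves polynomials in $\bo{x}$. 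Thus, for any $P$ and $Q$ built from the $f_i$, each of the three counts $\#\{y : P(y)=0,\ \mathrm{sign}(Q(y)) = \varepsilon\}$, $\varepsilon \in \{<,=,>\}$, depends on $\bo{x}$ only through a finite Boolean combination of polynomial sign conditions.

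I would then use a standard inclusion–exclusion over sign patterns: apply Hermite's theorem to $P = \prod_i f_i$ (or a squarefree part) and to test polynomials $Q = \prod_i f_i^{a_i}$ with $a_i \in \{0,1,2\}$, and invert a fixed finite matrix to recover, for each target sign pattern $\sigma$, the number of $y \in \R$ realizing $\sigma$ as an $\R[\bo{x}]$-linear combination of the Tarski queries. Augmenting this with the behaviour at $\pm \infty$ (which depends only on the signs of the leading coefficients) handles unbounded roots. The projection condition then becomes the statement that the resulting nonnegative integer is positive, i.e.\ a Boolean combination of polynomial sign conditions on $\bo{x}$, which by definition is semialgebraic.

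The main obstacle is the effective translation of \emph{there exists $y$} into a Boolean combination of polynomial sign conditions on the coefficients; this is the substantive algebraic content, classically due to Tarski and streamlined by Seidenberg, Cohen, and H\"ormander, and requires developing subresultants, Hermite's form, and the combinatorics of realizable sign patterns. Since this theory goes well beyond what is needed elsewhere in the paper, I would simply invoke the statement from \cite{Bierstone88} rather than reproduce the full argument.
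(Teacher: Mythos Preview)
Your proposal is correct and ultimately matches the paper: the paper gives no proof at all and simply cites \cite{Bierstone88}, which is exactly what you conclude you would do after sketching the quantifier-elimination argument. Your outline via Hermite's form and subresultants is a valid route to Tarski--Seidenberg, but it is extra commentary beyond what the paper provides.
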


Let $S \subseteq \R^{n+n'}$, for a $\bo{t}\in \R^{n'}$ we call $S_{\bo{t}}=\{ \x \in \R^n: (\x,\bo{t})\in S\}$ the fiber of $S$ above $\bo{t}$. Clearly, if $S$ is semialgebraic also the fibers $S_{\bo{t}}$ are semialgebraic. If so, we call $S$ a semialgebraic family.

Let $\Lambda$ be a lattice of $\R^n$, i.e., the $\Z$-span of $n$ linearly independent vectors of $\R^n$. Let $\lambda_i=\lambda_i(\Lambda)$ for $i=1,\ldots,n$ be the successive minima of $\Lambda$ with respect to the zero centered unit ball $B_0(1)$, i.e., for $i=1,...,n$
\begin{alignat*}1
\lambda_i=\inf\{\lambda:B_0(\lambda)\cap\Lambda \text{ contains $i$ linearly independent vectors}\}.
\end{alignat*}
The following theorem is a special case of Theorem 1.3 of \cite{Barroero2012}.

\begin{theorem}\label{counttheorem}
Let $Z\subset \R^{n+n'}$ be a semialgebraic family and suppose the  fibers $Z_{\bo{t}}$ are bounded. Then there exists a constant $c_Z \in \R$, depending only on the family, such that, for every $\bo{t} \in \R^{n'}$,
\begin{equation*}\label{eqcount}
\left| |Z_{\bo{t}}\cap \Lambda|-\frac{\Vol(Z_{\bo{t}})}{\det \Lambda} \right|\leq \sum_{j=0}^{n-1}c_{Z}\frac{V_j(Z_{\bo{t}})}{\lambda_1\cdots \lambda_j},
\end{equation*}
where $V_j(Z_{\bo{t}})$ is the sum of the $j$-dimensional volumes of the orthogonal projections of $Z_{\bo{t}}$ 
on every $j$-dimensional coordinate subspace of $\R^n$ and $V_0(Z_{\bo{t}})=1$.
\end{theorem}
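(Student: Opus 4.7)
The theorem is a semialgebraic strengthening of Davenport's classical lattice point counting estimate, and I would prove it in three phases: reducing the count to the standard lattice $\Z^n$, applying a Davenport-style discrepancy bound to the transformed family, and harvesting uniformity in $\bo{t}$ from semialgebraic cell decomposition.

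First, I would reduce to $\Z^n$ via a Minkowski-reduced basis. By Minkowski's second theorem, $\Lambda$ admits a basis $b_1,\ldots,b_n$ with $|b_i|\leq c_n\lambda_i$ for a purely dimensional constant $c_n$. Let $B$ be the matrix with columns $b_i$, so $\Lambda=B\Z^n$ and $\det\Lambda=|\det B|$. Then
\[
|Z_{\bo{t}}\cap \Lambda|=|(B^{-1}Z_{\bo{t}})\cap \Z^n|,\qquad \frac{\Vol(Z_{\bo{t}})}{\det\Lambda}=\Vol(B^{-1}Z_{\bo{t}}),
\]
so the problem reduces to estimating the discrepancy $\bigl||S\cap\Z^n|-\Vol(S)\bigr|$ for $S=B^{-1}Z_{\bo{t}}$. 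Crucially, as $B$ ranges over matrices and $\bo{t}$ over $\R^{n'}$, the collection $\{B^{-1}Z_{\bo{t}}\}$ is again a semialgebraic family in the enlarged parameter space, since the defining polynomial inequalities of $Z$ pull back through the rational map $\x\mapsto B\x$ to polynomial inequalities in $(\x, B, \bo{t})$.

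Second, I would invoke a Davenport-type bound for $\Z^n$ points in sets drawn from a semialgebraic family. The classical Davenport estimate gives $\bigl||S\cap\Z^n|-\Vol(S)\bigr|\leq \sum_{j=0}^{n-1}C_j V_j(S)$, provided $S$ admits a decomposition into a bounded number of pieces, each a bijective image of a bounded convex set under a coordinate projection, with the constants $C_j$ depending only on the complexity of this decomposition. For a semialgebraic family, cell decomposition (a consequence of Tarski-Seidenberg, Theorem \ref{tarski}) produces such a decomposition of each fiber with a number of cells bounded uniformly in the parameters. This yields a constant $c'_Z$, depending only on the family $Z$, such that the Davenport estimate applies to every $B^{-1}Z_{\bo{t}}$ with the same $c'_Z$.

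Third, I would push the projection volumes back through $B^{-1}$. For each $j$-dimensional coordinate subspace, the orthogonal projection of $B^{-1}Z_{\bo{t}}$ onto it can be bounded, using the Hadamard-type inequality and the structure of $B$, by a sum of volumes of projections of $Z_{\bo{t}}$ scaled by products of norms of columns of $B^{-1}$. The duality between the successive minima of $\Lambda$ and those of the dual lattice, combined with $|b_i|\leq c_n\lambda_i$, gives bounds on the relevant $j\times j$ minors of $B^{-1}$ of order $(\lambda_1\cdots\lambda_j)^{-1}$, whence $V_j(B^{-1}Z_{\bo{t}})\ll V_j(Z_{\bo{t}})/(\lambda_1\cdots\lambda_j)$ with implicit constant depending only on $n$. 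Combining the three steps yields the claim with a constant $c_Z$ absorbing $c'_Z$ and the purely dimensional factors.

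The step I expect to be the main obstacle is the third one: relating the projection volumes of the transformed set $B^{-1}Z_{\bo{t}}$ to those of $Z_{\bo{t}}$ in a way that produces exactly the factors $(\lambda_1\cdots\lambda_j)^{-1}$ without losing control over the coordinate-projection structure. A direct change of variables mixes all coordinates, so the argument has to exploit both Minkowski's theorem on reduced bases and the fact that projections of $Z_{\bo{t}}$ onto arbitrary $j$-dimensional subspaces can be controlled by coordinate projections (up to dimensional constants). The uniformity of $c_Z$ in $\bo{t}$, by contrast, is exactly what the semialgebraic assumption is designed to provide and follows cleanly from cell decomposition.
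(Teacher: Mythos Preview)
The paper does not prove this theorem at all: it is simply quoted as a special case of Theorem~1.3 of \cite{Barroero2012}, the o-minimal lattice point counting result of Barroero and Widmer. There is therefore no proof in the present paper to compare your proposal against.

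That said, your three-phase outline is in essence the strategy used in \cite{Barroero2012}: pass to $\Z^n$ via a Minkowski-reduced basis, obtain a Davenport-type discrepancy bound whose constants are uniform over the family thanks to (o-minimal, here semialgebraic) cell decomposition, and then translate the projection volumes back through the change of basis. Your identification of the third step as the crux is accurate; controlling $V_j(B^{-1}Z_{\bo t})$ by $V_j(Z_{\bo t})/(\lambda_1\cdots\lambda_j)$ is done in \cite{Barroero2012} (building on earlier work of Widmer) via explicit bounds on the $j\times j$ minors of $B$ and $B^{-1}$ coming from the reduced-basis property and Minkowski's second theorem. One caveat: the heuristic you offer at the end, that projections of $Z_{\bo t}$ onto \emph{arbitrary} $j$-planes are controlled by coordinate projections up to a dimensional constant, is not the mechanism that makes the argument go through, and is not what is used in the cited proof; the control comes instead from the specific linear-algebraic structure of the reduced basis matrix, not from a general fact about projections of sets.
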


\section{A semialgebraic family}

In this section we introduce the family we want to apply Theorem \ref{counttheorem} to.

We see the Mahler measure as a function of the coefficients of the polynomial. We fix $n>0$ and define $M:\R^{n+1}$ or $\C^{n+1} \rightarrow [0,\infty)$ such that
$$
M(z_0,\ldots ,z_n)= M(z_0 X^n + \dots + z_n).
$$
These two functions satisfy the definition of bounded distance function in the sense of the geometry of numbers, i.e.,
\begin{enumerate}
\item $M$ is continuous;
\item $M(\bo{z})=0$ if and only if $\bo{z}=\bo{0}$;
\item $M(w\bo{z})=|w|M(\bo{z})$, for any scalar $w \in \R$ or $\C$.
\end{enumerate}

Properties (2) and (3) are obvious from the definition, while continuity was proved already by Mahler (see \cite{Mahler1961}, Lemma 1).

Let $\M$ be the monic Mahler measure function, i.e., $\M(\z)=M(1,\z)$ for $\z \in \R^\en$ or $\C^\en$.

In the following we consider the complex monic Mahler measure as a function
\begin{equation*}
\begin{array}{cccc}\label{Mahlerf}
\M: & \R^{2\en} & \rightarrow & \R \\
  & \left( x_1,\ldots ,x_{2\en}\right) & \mapsto & M\left(X^\en + (x_1+ix_2)X^{\en-1}+ \cdots +x_{2\en-1}+ix_{2\en}\right).
\end{array}
\end{equation*}

We fix positive integers $\en, \m ,r,s$ with $\m=r+2s$ and $d_1 ,\dots d_{r+s}$ such that $d_i=1$ for $i=1, \dots ,r$ and $d_i=2$ for $i=r+1, \dots ,r+s$.

We define
\begin{equation}\label{defZ}
Z=\lg (\bo{x}_1, \ldots , \bo{x}_{r+s},t) \in \left( \R^\en \right)^r \times \left( \R^{2\en} \right)^s \times \R : \prod_{i=1}^{r+s} \M(\bo{x}_i)^{d_i}\leq t  \rg.
\end{equation}
Here $\x_i \in \R^{d_i \en}$ and $\M(\bo{x}_i)$ is the real or the complex monic Mahler measure respectively if $i=1, \dots ,r$ or $i=r+1, \dots ,r+s$.

We want to count lattice points in the fibers $Z_t\subseteq \R^{mn}$ using Theorem \ref{counttheorem}, therefore we need to show that $Z$ is a semialgebraic set and that the fibers $Z_t$ are bounded.

\begin{lemma}

The set $Z$ defined in (\ref{defZ}) is semialgebraic.

\end{lemma}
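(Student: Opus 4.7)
The plan is to exhibit $Z$ as the image, under a coordinate projection, of a set cut out by polynomial equalities and inequalities, and then invoke the Tarski--Seidenberg principle (Theorem~\ref{tarski}).

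I would introduce auxiliary real variables $a_{i,j}$ and $b_{i,j}$ for $i=1,\dots,r+s$ and $j=1,\dots,\en$, intended to be the real and imaginary parts of the $\en$ complex roots $\alpha_{i,j}=a_{i,j}+\sqrt{-1}\,b_{i,j}$ of the monic degree-$\en$ polynomial whose coefficient vector is encoded by $\bo{x}_i$. For $i\le r$ the vector $\bo{x}_i\in\R^{\en}$ encodes a polynomial in $\R[X]$, so Vieta's formulas amount to equating the real parts of the elementary symmetric polynomials of $\alpha_{i,1},\dots,\alpha_{i,\en}$ to $\pm$ the entries of $\bo{x}_i$ and setting their imaginary parts to zero; for $i>r$ the vector $\bo{x}_i\in\R^{2\en}$ encodes a polynomial in $\C[X]$, and both real and imaginary parts of the symmetric polynomials are matched to the entries of $\bo{x}_i$. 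All of this translates into a system of polynomial equations in the $a_{i,j},b_{i,j}$ and the entries of the $\bo{x}_i$.

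Next I would introduce auxiliary real variables $m_{i,j}$ intended to equal $\max\{1,|\alpha_{i,j}|\}$, together with the three constraints
\begin{equation*}
m_{i,j}\ge 1,\qquad m_{i,j}^2\ge a_{i,j}^2+b_{i,j}^2,\qquad (m_{i,j}-1)\bigl(m_{i,j}^2-a_{i,j}^2-b_{i,j}^2\bigr)=0.
\end{equation*}
A short case analysis, splitting on whether $a_{i,j}^2+b_{i,j}^2\le 1$ or $>1$, shows that these constraints force $m_{i,j}=\max\{1,|\alpha_{i,j}|\}$. The defining inequality of $Z$, namely $\prod_{i=1}^{r+s}\M(\bo{x}_i)^{d_i}\le t$, then becomes the polynomial inequality $\prod_{i=1}^{r+s}\bigl(\prod_{j=1}^{\en} m_{i,j}\bigr)^{d_i}\le t$.

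Let $\widetilde Z$ denote the subset of the enlarged Euclidean space cut out by all of the above polynomial (in)equalities; by construction $\widetilde Z$ is semialgebraic. Since any coefficient vector $\bo{x}_i$ gives rise to a monic polynomial that factors completely over $\C$, the set $Z$ coincides with the image of $\widetilde Z$ under the coordinate projection that forgets the auxiliary variables $a_{i,j},b_{i,j},m_{i,j}$. Iterating Theorem~\ref{tarski} finitely many times, $Z$ is semialgebraic. There is no serious obstacle: the only substantive point is that the non-polynomial quantity $\max\{1,|\alpha|\}$ admits a polynomial characterisation via the three (in)equalities above.
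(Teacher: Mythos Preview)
Your proof is correct and follows essentially the same route as the paper: introduce the real and imaginary parts of the roots as auxiliary variables, relate them to the coefficient vectors $\bo{x}_i$ via the elementary symmetric polynomials, encode the Mahler-measure inequality semialgebraically, and then project down using Tarski--Seidenberg. The only notable difference is cosmetic: the paper introduces a single auxiliary $t_i$ per index $i$ with the condition $\prod_{l}\max\{1,(\alpha_l^{(i)})^2+(\beta_l^{(i)})^2\}=t_i^2$, $t_i\ge 0$, and then $\prod_i t_i^{d_i}\le t$, leaving implicit the (standard) fact that a $\max$ of finitely many polynomials is semialgebraic; you instead introduce one $m_{i,j}$ per root and give an explicit polynomial characterisation of $\max\{1,|\alpha_{i,j}|\}$, which makes that point fully explicit. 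Either packaging yields the same projection argument.
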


\begin{proof}
Recall the definition of $Z$. To each $\x_i\in \R^{d_in}$ corresponds a monic polynomial $f_i$ of degree $n$ with real (for $i=1,\dots r$) or complex (for $i=r+1,\dots r+s$) coefficients.
Let $S$ be the set of points 
$$
\left( \bo{x}_1, \ldots , \bo{x}_{r+s},t, t_1 ,\dots ,t_{r+s}, \bo{\alpha}^{(1)},\bo{\beta}^{(1)}, \dots ,\bo{\alpha}^{(r+s)},\bo{\beta}^{(r+s)} \right)
$$
in $\R^{n(r+2s)+1+r+s+2n(r+s)}$, with $\bo{\alpha}^{(i)},  \bo{\beta}^{(i)} \in \R^n$, such that 
\begin{itemize}
\item $\bo{\alpha}^{(i)} $ and $ \bo{\beta}^{(i)}$ are, respectively, the vectors of the real and the imaginary parts of the $n$ roots of $f_i$, for every $i=1 ,\dots ,r+s$;
\item $\prod_{l=1}^{n}\max \lg 1,  \left(\alpha_l^{(i)}\right)^2+\left( \beta_l^{(i)}\right)^2 \rg=t_i^2$ and $t_i \geq 0$, for every $i=1 ,\dots ,r+s$;
\item $\prod_{i=1}^{r+s}t_i^{d_i} \leq t$.
\end{itemize}
It is clear that the set $S$ is defined by polynomial equalities and inequalities. In fact, the first condition is enforced by the fact that the coordinates of $\x_i$ are the images of $\bo{\alpha}^{(i)}$ and $ \bo{\beta}^{(i)}$ under the appropriate symmetric functions, which are polynomials. The second and the third conditions are also clearly obtained by polynomial equalities and inequalities. Therefore, $S$ is a semialgebraic set. The claim follows after noting that $Z$ is nothing but the projection of $S$ on the first $n(r+2s)+1$ coordinates and applying the Tarski-Seidenberg principle (Theorem \ref{tarski}).
\end{proof}

By Lemma 1.6.7 of \cite{BombGub}, there exists a positive real constant $\gamma\leq 1$ such that
$$
\gamma |\z|_\infty \leq M(\z), \text{ for every $\z \in \R^{\en+1}$ or $\C^{\en+1}$},
$$
where, if $\z=(z_0, \dots ,z_\en) \in \R^{\en+1}$ or $\C^{\en+1}$, $|\z|_\infty=\max \lg |z_0| ,\ldots ,|z_\en| \rg$ is the usual $\max$ norm. 
Clearly we have, for $\x \in \R^\en$
\begin{equation}\label{ineqnorm}
N(\x):=\gamma |(1,\x)|_\infty \leq \M(\x) 
\end{equation}
in the real case and, for the complex case,
\begin{equation}\label{ineqnorm2}
N(\x):= \gamma |(1,\x)|_\infty \leq \gamma |(1,\z)|_\infty \leq \M(\z)= \M(\x) ,
\end{equation}
where $\x=(x_1, \dots ,x_{2\en}) \in \R^{2\en}$ and $\z=(x_1 + i x_2 ,\dots , x_{2\en-1} + i x_{2\en})$.

Recall that, by the definition, the monic Mahler measure function assumes values greater than or equal to 1, therefore, if $(\bo{x}_1, \ldots , \bo{x}_{r+s}) \in Z_t$ then $\M(\x_i)^{d_i} \leq t$ for every $i$. Thus, $ |\x_i|^{d_i}_\infty \leq \frac{t}{\gamma^{d_i}}$ and this means that $Z_t$ is bounded for every $t \in \R$. 

Now we can apply Theorem \ref{counttheorem} to the family $Z$. If we set $Z(T)=Z_T$, we have
\begin{equation}\label{extim}
\left| |Z(T)\cap \Lambda|-\frac{\Vol(Z(T))}{\det \Lambda} \right|\leq \sum_{j=0}^{{mn}-1}C \frac{V_j(Z(T))}{\lambda_1\cdots \lambda_j},
\end{equation}
for every $T \in \R$, where $\Lambda$ is a lattice in $\R^{mn}$ and $C$ is a real constant independent of $\Lambda$ and $T$. Recall that $V_j(Z(T))$ is the sum of the $j$-dimensional volumes of the orthogonal projections of $Z(T)$ on every $j$-dimensional coordinate subspace of $\R^{mn}$ and $V_0(Z(T))=1$.

\section{Proof of Theorem \ref{mainthm}}

We fix a number field $\ks$ of degree $m$ over $\Q$. The ring of integers $\Oseen_\ks$ of $\ks$, embedded into $\R^{r+2s}$ via $\sigma=(\sigma_1 ,\dots ,\sigma_{r+s})$, is a lattice of full rank. We embed $(\Oseen_\ks)^\en$ in $\R^{mn}$ via $\bo{a}\mapsto (\sigma_1 (\bo{a}) ,\dots ,\sigma_{r+s}(\bo{a}))$, where the $\sigma_i$ are extended to $\ks^\en$. We want to count lattice points of $\Lambda=(\Oseen_\ks)^\en$ inside $Z(T)$.

\begin{lemma}\label{lemdet}
 
We have 
$$
\det \Lambda =\left( 2^{-s}  \sqrt{|\Delta_\ks|}\right)^{\en},
$$
and its first successive minimum is $\lambda_1 \geq 1$.

\end{lemma}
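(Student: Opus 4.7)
The two assertions split cleanly. For the covolume, I would observe that under the given componentwise embedding, $\Lambda = (\Oseen_\ks)^\en$ is a direct sum of $\en$ copies of the single-factor lattice $\sigma(\Oseen_\ks) \subset \R^\m$, placed in pairwise orthogonal coordinate blocks of $\R^{\m\en} = (\R^\m)^\en$. A block-diagonal choice of basis then gives $\det \Lambda = (\det \sigma(\Oseen_\ks))^\en$, and the problem reduces to the classical Minkowski covolume calculation. I would pick a $\Z$-basis $\omega_1, \dots, \omega_\m$ of $\Oseen_\ks$ and compare the real matrix whose rows are $\sigma(\omega_j) \in \R^\m$ with the complex matrix $(\sigma_i(\omega_j))_{1 \leq i,j \leq \m}$ involving all $\m$ field embeddings. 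These differ by a change-of-basis that acts as the identity on the real slots and, on each of the $s$ complex-conjugate pairs, as a $2 \times 2$ transition matrix of shape $\bigl(\begin{smallmatrix}1 & i \\ 1 & -i\end{smallmatrix}\bigr)$ whose absolute determinant equals $2$. Since $|\det(\sigma_i(\omega_j))|^2 = |\Delta_\ks|$ by definition of the discriminant, this produces $\det \sigma(\Oseen_\ks) = 2^{-s}\sqrt{|\Delta_\ks|}$ and hence the stated formula for $\det \Lambda$.

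For $\lambda_1 \geq 1$, the plan is to use that any non-zero algebraic integer has rational integer norm of absolute value at least $1$. Given a non-zero $\bo{a} = (a_1, \dots, a_\en) \in \Lambda$, I pick $j$ with $a_j \neq 0$ and work inside the corresponding block of $\sigma(\bo{a})$. Since
$$
\prod_{i=1}^{r+s}|\sigma_i(a_j)|^{d_i} = |N_{\ks/\Q}(a_j)| \geq 1,
$$
at least one index $i_0$ satisfies $|\sigma_{i_0}(a_j)| \geq 1$. If $\sigma_{i_0}$ is real, the corresponding coordinate of $\sigma(\bo{a})$ already has modulus $\geq 1$; if $\sigma_{i_0}$ is complex, the associated $(\mathrm{Re},\mathrm{Im})$ pair contributes squared Euclidean length $|\sigma_{i_0}(a_j)|^2 \geq 1$. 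Either way the $j$-th block of $\sigma(\bo{a})$ has Euclidean norm at least $1$, hence so does $\sigma(\bo{a})$ itself, giving $\lambda_1 \geq 1$.

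Neither step presents a serious obstacle. The only point requiring mild care is the embedding convention: complex coordinates here are split as $(\mathrm{Re},\mathrm{Im})$ rather than $(\sqrt{2}\,\mathrm{Re},\sqrt{2}\,\mathrm{Im})$, which is exactly what produces the factor $2^{-s}$ in the covolume and balances the $2^{s\e}$ appearing in the definition of $C^{(\e)}_{\ks}$ in \eqref{const}.
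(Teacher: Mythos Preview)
Your argument is correct. The covolume computation is the standard Minkowski calculation, and your reduction to a single copy of $\sigma(\Oseen_\ks)$ via the block-diagonal structure (up to a coordinate permutation, which is an isometry) is legitimate; the transition-matrix bookkeeping for the $(\mathrm{Re},\mathrm{Im})$ convention is also right and yields the factor $2^{-s}$. The norm argument for $\lambda_1 \geq 1$ is the expected one and is fine as written.

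The only difference from the paper is that the paper does not actually give a proof: it simply cites Lemma~5 of Masser--Vaaler \cite{Masser2007} as a special case. Your write-up is therefore a self-contained substitute for that citation, which is a small gain in readability at the cost of a few extra lines.
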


\begin{proof}
This is a special case of Lemma 5 of \cite{Masser2007}. 
\end{proof}

Now we need to calculate the volume of $Z(T)$. We do something more general. Suppose we have $r+s$ continuous functions $f_i : \R^{n_i} \rightarrow [1, \infty)$, $i=1, \dots ,r+s$ where $1\leq n_i \leq d_i \en$ for every $i$. We define
\begin{equation}\label{rball}
Z_i(T) =\{\x \in \R^{ n_i}: f_i(\x)\leq T\},
\end{equation}
for every $i=1,\dots ,r+s$. 
Suppose that, for every $i$, there exists a polynomial $p_i(X) \in \R[X]$ of degree $n_i$ such that the volume of $Z_i(T)$ is $p_i\left( T \right) $ for every $T\geq 1$. Let $C_i$ be the leading coefficient of $p_i$. Moreover, let 
$$
\widetilde{Z}(T)=\left\lbrace (\bo{x}_1, \ldots , \bo{x}_{r+s}) \in \R^{\sum n_i} : \prod_{i=1}^{r+s} f_i(\bo{x}_i)^{d_i}\leq T \right\rbrace.
$$
Note that, since $f_i(\x_i)\geq 1$ for every $i$, $\widetilde{Z}(T)$ is bounded for every $T$.

\begin{lemma}\label{lemvol}

Let $q=r+s-1$. Under the hypotheses and the notation from above, for every $T \geq 1$, we have
\begin{equation*}\label{eqvol}
\Vol \left( \widetilde{Z}(T)\right)=\widetilde{p} \left( T^\frac{1}{2},\log T \right),
\end{equation*}
where $\widetilde{p}(X,Y)\in \R[X,Y]$, $\deg_X \widetilde{p} \leq 2 \en$, $\deg_Y \widetilde{p} \leq q$. In the case $n_i= d_i \en$ for every $i=1,\dots ,r+s$, the coefficient of $X^{2\en} Y^q$ is $\frac{\en^q }{q!} \prod_{i=1}^{q+1} C_i$. If $n_i < d_i \en $ for some $i$ then the monomial $X^{2 \en} Y^q$ does not appear in $\widetilde{p}$.

\end{lemma}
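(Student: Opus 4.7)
The approach is to express $\Vol(\widetilde Z(T))$ as an integral over a simplex and then evaluate it by Laplace transforms. First, pushing forward the Lebesgue measure on $\R^{n_i}$ by $f_i$ gives a measure on $[1,\infty)$ equal to $\nu_i=p_i(1)\delta_1+p_i'(u)\mathbf{1}_{u>1}\,du$, since $f_i\ge 1$ and $\Vol(\{f_i\le U\})=p_i(U)$ for $U\ge 1$. Hence
\[
\Vol\bigl(\widetilde Z(T)\bigr)=\int_{u_i\ge 1,\ \prod u_i^{d_i}\le T}\prod_{i=1}^{r+s}d\nu_i(u_i).
\]
The substitution $y_i=d_i\log u_i$ straightens the constraint into the simplex $\{y_i\ge 0,\ \sum y_i\le L\}$ with $L=\log T$; writing $p_i(u)=\sum_j c_{i,j}u^j$ with $c_{i,n_i}=C_i$, the absolutely continuous part of $\nu_i$ pushes forward to the density $E_i(y)=\sum_{j=1}^{n_i}\frac{j\,c_{i,j}}{d_i}\,e^{jy/d_i}$. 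Splitting $\Vol(\widetilde Z(T))$ into a sum over subsets $S\subseteq\{1,\ldots,r+s\}$ recording which coordinates sit at the atom $y_i=0$ and expanding each $\prod_{i\in S}E_i(y_i)$ reduces the task to evaluating simplex integrals
\[
I(\alpha;L)=\int_{y_i\ge 0,\ \sum y_i\le L}\exp\Bigl(\textstyle\sum_i\alpha_iy_i\Bigr)\,dy
\]
for tuples of positive half-integers $\alpha_i=j_i/d_i$ with $1\le j_i\le n_i$.

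The key computation is the Laplace-transform identity $\int_0^\infty e^{-sL}I(\alpha;L)\,dL=1/\bigl(s\prod_i(s-\alpha_i)\bigr)$, inverted by partial fractions: a pole at $\alpha\ne 0$ of multiplicity $\ell$ contributes $L^{\ell-1}e^{\alpha L}/(\ell-1)!$ up to a constant factor, and the simple pole at $s=0$ contributes a constant. Since each $\alpha_i\in\tfrac{1}{2}\Z_{>0}$, every $e^{\alpha L}=T^\alpha$ is a power of $T^{1/2}$; since $\alpha_i\le n_i/d_i\le n$, the degree in $T^{1/2}$ is at most $2n$; and since the denominator has at most $r+s=q+1$ nontrivial linear factors, the degree in $L$ is at most $q$. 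Contributions from $S\subsetneq\{1,\ldots,r+s\}$ have $L$-degree bounded by $|S|-1<q$. Combining these, $\Vol(\widetilde Z(T))=\widetilde p(T^{1/2},\log T)$ with $\deg_X\widetilde p\le 2n$ and $\deg_Y\widetilde p\le q$.

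To extract the coefficient of $X^{2n}Y^q$, note that attaining $Y^q$ requires a pole of multiplicity $r+s$, forcing $S=\{1,\ldots,r+s\}$ and all $\alpha_i$ equal; pairing with $X^{2n}=T^n$ forces the common value $\alpha_i=n$, i.e.\ $j_i=d_i n$ for every $i$. This extremal selection is possible only when $n_i=d_i n$ for all $i$, which proves the vanishing assertion in the contrary case. In the balanced case, the prefactor $\prod_i(j_ic_{i,j_i}/d_i)$ at $j_i=d_i n$ equals $n^{r+s}\prod_iC_i$, while inverting $1/\bigl(s(s-n)^{r+s}\bigr)$ produces the leading coefficient $1/(n\cdot q!)$ of $L^qe^{nL}$, so their product $\frac{n^q}{q!}\prod_iC_i$ is the claimed top coefficient. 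The main technical hurdle is the bookkeeping across all pairs $(S,(j_i))$ to confirm that no other combination contributes to $X^{2n}Y^q$, but the uniform bounds on the $\alpha_i$ and on pole multiplicity make the verification routine.
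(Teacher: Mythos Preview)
Your argument is correct and takes a genuinely different route from the paper. The paper proceeds by induction on $q$: it peels off the variable $\x_{q+1}$, applies the inductive hypothesis to the inner integral, pushes forward by $f_{q+1}$, and then computes the resulting one-variable integrals $\mathcal{I}(a,b,c)=\int_1^{T^{1/d_{q+1}}}X^a(T/X^{d_{q+1}})^{b/2}(\log(T/X^{d_{q+1}}))^c\,dX$ by elementary calculus, tracking the top coefficient through the recursion. You instead push forward all $r+s$ variables at once, straighten the multiplicative constraint into a simplex via $y_i=d_i\log u_i$, and evaluate the resulting exponential simplex integrals by the clean Laplace identity $\int_0^\infty e^{-sL}I(\alpha;L)\,dL=1/\bigl(s\prod_i(s-\alpha_i)\bigr)$, reading off the structure from the partial-fraction decomposition. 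Your approach is more conceptual and makes the degree bounds and the identification of the top monomial almost automatic (multiplicity of poles $\leftrightarrow$ power of $L$, location of poles $\leftrightarrow$ power of $T^{1/2}$); the paper's approach is more elementary in that it avoids transform methods, but at the cost of heavier bookkeeping in the induction. One minor remark: your sentence ``contributions from $S\subsetneq\{1,\ldots,r+s\}$ have $L$-degree bounded by $|S|-1<q$'' should be read for $|S|\ge 1$; the case $|S|=0$ contributes the constant $\prod_i p_i(1)$, which of course also misses $Y^q$ when $q\ge1$.
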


\begin{proof}
We have
$$
V(T):=\Vol\left(\widetilde{Z}(T)\right)= \int_{\widetilde{Z}(T)} \td \x_1 \dots \td \x_{q+1}.
$$
We proceed by induction on $q$. If $q=0$ there is nothing to prove. Suppose $q>0$ and let
$$
\widetilde{Z}^{(q)}(T)=\left\lbrace (\bo{x}_1, \ldots , \bo{x}_{q}) \in \R^{n_1+\dots +n_{q}} : \prod_{i=1}^{q} f_i(\bo{x}_i)^{d_i}\leq T \right\rbrace.
$$
Then
$$
V(T)= \int_{Z_{q+1}\left(T^{\frac{1}{d_{q+1}}} \right)} \left( \int_{\widetilde{Z}^{(q)}\left( T f_{q+1}(\x_{q+1})^{-d_{q+1}} \right)} \td \x_1 \dots \td \x_q \right) \td \x_{q+1}.
$$
By the inductive hypothesis there exists $\widetilde{p}_q(X,Y)\in \R[X,Y]$ such that
\begin{gather*}
V(T)= \int_{Z_{q+1}\left(T^{\frac{1}{d_{q+1}}} \right)} \widetilde{p}_q\left(  \left(   \frac{T}{f_{q+1}(\x_{q+1})^{d_{q+1}}}  \right)^{\frac{1}{2}} , \log \left( \frac{T}{f_{q+1}(\x_{q+1})^{d_{q+1}}} \right) \right) \td \x_{q+1},
\end{gather*}
where $\widetilde{p}_q(X,Y)\in \R[X,Y]$, $\deg_X \widetilde{p}_q\leq 2 \en$, $\deg_Y \widetilde{p}_q\leq q-1$ and, if $n_i = d_i \en$ for every $i=1, \dots ,q$, the coefficient of $X^{2\en} Y^{q-1}$ is $\frac{\en^{q-1} }{(q-1)!} \prod_{i=1}^{q} C_i$. If not, that monomial does not appear.

By $\mc{L}^\en$, we indicate the Lebesgue measure on $\R^\en$. Since $f_{q+1}$ is a measurable function, we get 
$$
V(T)= \int_{\left[1,T^{\frac{1}{d_{q+1}}}\right] } \widetilde{p}_q\left(  \left(   \frac{T}{X^{d_{q+1}}}  \right)^{\frac{1}{2}} , \log \left( \frac{T}{X^{d_{q+1}}} \right) \right) \td \left( \mc{L}^{n_{q+1}}\circ  f_{q+1}^{-1} \right)(X),
$$
where we consider $\mc{L}^{n_{q+1}}\circ  f_{q+1}^{-1}$ as a measure on $\left[1,T^{\frac{1}{d_{q+1}}}\right]$. In particular for $(u,v] \subseteq \left[1,T^{\frac{1}{d_{q+1}}}\right]$, $$
\left(\mc{L}^{n_{q+1}}\circ  f_{q+1}^{-1}\right) ((u,v])=p_{q+1}(v) - p_{q+1}(u), 
$$ 
and $\left(\mc{L}^{n_{q+1}}\circ  f_{q+1}^{-1}\right) (\{1\})=p_{q+1}(1)$. Using 1.29 Theorem of \cite{Rudin}, we get
\begin{gather*}\label{V(T)}
V(T)= \int_{\left(1,T^{\frac{1}{d_{q+1}}}\right] } \widetilde{p}_q\left(  \left(   \frac{T}{X^{d_{q+1}}}  \right)^{\frac{1}{2}} , \log \left( \frac{T}{X^{d_{q+1}}} \right) \right) p'_{q+1}(X) \td  \mc{L}^{1} (X)+ \\
+\widetilde{p}_q\left(    T^{\frac{1}{2}} , \log  T\right) p_{q+1}(1), \nonumber
\end{gather*}
where $p_{q+1}'$ is the derivative of $p_{q+1}$.

For some integer $c\geq 0$ we put $L(X,c)=X^c$ in case $c>0$ and $L(X,0)=1$. 
Because of the linearity of the integral we are reduced to calculate
\begin{gather*}
\mc{I}(a,b,c) =\int_{1}^{T^{\frac{1}{d_{q+1}}}} X^a \left( \frac{T}{X^{d_{q+1}}}\right)^{\frac{b}{2}} L\left( \log \frac{T}{X^{d_{q+1}}},c \right) \td X= \\
=T^{\frac{b}{2}} \int_{1}^{T^{\frac{1}{d_{q+1}}}} X^{a-\frac{b}{2}d_{q+1}} L\left( \log T - \log \left( X^{d_{q+1}} \right) ,c\right)  \td X,
\end{gather*}
for some integers $a,b,c$, with $0\leq a \leq n_{q+1} -1$, $0 \leq b \leq 2 \en$ and $0\leq c \leq q-1$. 
We have three possibilities. If $a-\frac{b}{2}d_{q+1}=-1$, then 
\begin{gather*}
\mc{I}(a,b,c)=T^{\frac{b}{2}}\int_{1}^{T^{\frac{1}{d_{q+1}}}} X^{-1} L\left( \log T - \log \left( X^{d_{q+1}} \right) ,c\right)  \td X=\\
=\frac{1}{(c+1)d_{q+1}} T^{\frac{b}{2}} \left( \log T \right)^{c+1}.
\end{gather*}
If $a-\frac{b}{2}d_{q+1}\neq -1$ and $c=0$,
$$
\mc{I}(a,b,0)= 
\frac{T^{\frac{b}{2}} }{a-\frac{b}{2}d_{q+1}+1}\left( T^{\frac{a-\frac{b}{2}d_{q+1}+1}{d_{q+1}}}-1 \right)= \frac{ T^{\frac{a+1}{d_{q+1}}}-T^{\frac{b}{2}}}{a-\frac{b}{2}d_{q+1}+1}
$$
If $a-\frac{b}{2}d_{q+1}\neq -1$ and $c\neq 0 $, then 
$$
\mc{I}(a,b,c) =- \frac{T^{\frac{b}{2}}  \left( \log T \right)^c }{a-\frac{b}{2}d_{q+1}+1}+ \frac{c d_{q+1}}{a-\frac{b}{2}d_{q+1}+1}\mc{I}(a,b,c-1).
$$
Therefore, one can see that $\mc{I}(a,b,c)$ is a polynomial in $T^{\frac{1}{2}}$ and $\log T$. In particular $\mc{I}(a,b,c)=\widehat{p}(T^{\frac{1}{2}},\log T)$, where $\widehat{p}(X,Y) \in \R[X,Y]$, with $\deg_X\widehat{p}\leq 2n$ and $\deg_Y\widehat{p}\leq q$. Note that in the case $a= d_{q+1}n-1$, $b=2 \en$ and $c=q-1$, the coefficient of $X^{2 \en} Y^{q}$ is $\frac{1}{qd_{q+1}}$ and 0 for any other choice of $a,b$ and $c$. Therefore, the monomial $X^{2\en} Y^q$ does not appear in $\widehat{p}$ if either $n_{q+1}<d_{q+1}n$ or $X^{
2\en} Y^{q-1}$ does not appear in $\widetilde{p}_q$, i.e., if $n_i < d_i \en $ for some $i$. To conclude, recall that, in the case $n_i=d_i \en$ for every $i=1, \dots ,r+s$, $p'_{q+1}$ has leading coefficient $\en d_{q+1} C_{q+1}$ and the coefficient of $X^{
2\en} Y^{q-1}$ in $\widetilde{p}_q$ is $\frac{\en^{q-1} }{(q-1)!} \prod_{i=1}^{q} C_i$, thus, the coefficient in front of $\mc{I}(d_{q+1}n -1,2 \en,q-1)$ in $V(T)$ is $\frac{\en^{q} d_{q+1} }{(q-1)!} \prod_{i=1}^{q+1} C_i$.
\end{proof}

The volumes of the sets
\begin{equation}\label{ballr}
\{(z_1,\ldots ,z_\en ) \in \R^\en :M(1,z_1, \ldots ,z_\en )\leq T\}
\end{equation}
and
\begin{equation}\label{cball}
\{(z_1,\ldots ,z_\en ) \in \C^n:M(1,z_1, \ldots ,z_\en )^2\leq T\}
\end{equation}
were computed by Chern and Vaaler in \cite{Chern2001}. By (1.16) and (1.17) of \cite{Chern2001}, these volumes are, for every $T\geq 1$, polynomials $p_\R(T)$ and $p_\C(T)$ of degree $\en$ and leading coefficients, respectively,
$$
C_{\R,\en } = 2^{\en -M} \left( \prod_{l=1}^{M}\left( \frac{2l}{2l+1} \right)^{\en -2l} \right) \frac{\en^M}{M!},  \footnote{There is a misprint in  (1.16) of \cite{Chern2001}, $2^{-N}$ should read $2^{-M}$.}
$$
with $M=\lfloor \frac{\en -1}{2} \rfloor$, and 
$$
C_{\C,\en} = \pi^\en \frac{\en^\en }{\left( \en ! \right)^2}.
$$

Suppose $q=0$ and recall Lemma \ref{lemdet}. In this case $Z(T)$ corresponds to (\ref{ballr}) if $m=1$ or to (\ref{cball}) if $m=2$. We have
\begin{equation}\label{eqvolq0}
\frac{\Vol(Z(T))}{\det \Lambda}=\frac{ 2^{s\en }}{\left(  \sqrt{|\Delta_\ks|}\right)^{\en}} C_{\R,\en}^r C_{\C,\en}^s T^\en  + \frac{P(T)}{\left(  \sqrt{|\Delta_\ks|}\right)^{\en}},
\end{equation}
for every $T> 1$, where $P(X)\in \R[X]$ depends only on $n$, $r$ and $s$  and has degree at most $\en-1$.

\begin{corollary}\label{corvol}
Suppose $q>0$. We have, for $T> 1$,
\begin{equation}\label{eqvolq1}
\frac{\Vol(Z(T))}{\det \Lambda}=\frac{\en^q 2^{s\en }}{q!\left(  \sqrt{|\Delta_\ks|}\right)^{\en}} C_{\R,\en}^r C_{\C,\en}^s T^\en \left( \log T \right)^q + \frac{P\left(T^\frac{1}{2},\log T\right)}{\left(  \sqrt{|\Delta_\ks|}\right)^{\en}},
\end{equation}
where $P(X,Y)\in \R[X,Y]$ depends on $n$, $r$ and $s$, $\deg_X P \leq 2 \en$, $\deg_Y P \leq q$ and the coefficient of $X^{2\en} Y^q$ is 0.
\end{corollary}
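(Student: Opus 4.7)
The proof of Corollary \ref{corvol} is essentially an application of Lemma \ref{lemvol} to the specific family $Z(T)$ defined in (\ref{defZ}), combined with the explicit volume formulas of Chern and Vaaler and the determinant computation of Lemma \ref{lemdet}. The plan is as follows.

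First I would identify $Z(T)$ with the set $\widetilde{Z}(T)$ of Lemma \ref{lemvol}, choosing $f_i=\M$ (the real monic Mahler measure on $\R^n$ for $i=1,\dots,r$ and the complex monic Mahler measure on $\R^{2n}$ for $i=r+1,\dots,r+s$), and $n_i=d_i\en$ for every $i$. To apply the lemma I must verify that each slice $Z_i(T)=\{\x\in\R^{n_i}:\M(\x)\leq T\}$ has volume equal to a polynomial of degree $n_i=d_i\en$ in $T$ with a known leading coefficient. For real places this is immediate from (\ref{ballr}): the volume equals $p_{\R}(T)$, a polynomial of degree $\en$ with leading coefficient $C_{\R,\en}$. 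For complex places the set (\ref{cball}) is defined by $\M^2\leq T$, so the change of variable $T\mapsto T^2$ shows that $\Vol\{\x\in\R^{2\en}:\M(\x)\leq T\}=p_{\C}(T^2)$, which is a polynomial of degree $2\en=d_i\en$ in $T$ whose leading coefficient is still $C_{\C,\en}$.

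With the hypotheses of Lemma \ref{lemvol} verified in the case $n_i=d_i\en$ for every $i$, the lemma gives
\begin{equation*}
\Vol(Z(T))=\widetilde{p}\bigl(T^{1/2},\log T\bigr),\qquad \widetilde{p}\in\R[X,Y],
\end{equation*}
with $\deg_X\widetilde{p}\leq 2\en$, $\deg_Y\widetilde{p}\leq q$, and the coefficient of $X^{2\en}Y^q$ equal to
\begin{equation*}
\frac{\en^q}{q!}\prod_{i=1}^{q+1}C_i=\frac{\en^q}{q!}C_{\R,\en}^{r}C_{\C,\en}^{s},
\end{equation*}
since $q+1=r+s$ and the leading coefficients $C_i$ are $C_{\R,\en}$ for real places and $C_{\C,\en}$ for complex places.

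Finally, dividing by $\det\Lambda=(2^{-s}\sqrt{|\Delta_\ks|})^{\en}$ from Lemma \ref{lemdet} pulls out the factor $2^{s\en}/(\sqrt{|\Delta_\ks|})^{\en}$, so the monomial $X^{2\en}Y^{q}$ in $\widetilde{p}$ becomes the main term
\begin{equation*}
\frac{\en^q 2^{s\en}}{q!\bigl(\sqrt{|\Delta_\ks|}\bigr)^{\en}}C_{\R,\en}^{r}C_{\C,\en}^{s}\,T^{\en}(\log T)^{q},
\end{equation*}
and all remaining monomials of $\widetilde{p}$ are collected into $P(T^{1/2},\log T)/(\sqrt{|\Delta_\ks|})^{\en}$, where $P\in\R[X,Y]$ satisfies $\deg_X P\leq 2\en$, $\deg_Y P\leq q$, and has zero coefficient in front of $X^{2\en}Y^q$. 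There is no real obstacle here; the only point that requires mild care is matching the normalisation in (\ref{cball}) (which is phrased in terms of $M^2\leq T$) to the formulation of Lemma \ref{lemvol} (which requires $f_i\leq T$ with a polynomial volume), and this is handled by the elementary substitution $T\mapsto T^2$ noted above.
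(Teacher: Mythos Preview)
Your proposal is correct and follows exactly the same route as the paper: apply Lemma \ref{lemvol} with $f_i=\M$ and $n_i=d_i\en$, feed in the Chern--Vaaler volume polynomials for (\ref{ballr}) and (\ref{cball}), and then divide by $\det\Lambda$ from Lemma \ref{lemdet}. The only thing you add beyond the paper's two-line proof is the explicit remark that $\Vol\{\M\leq T\}=p_{\C}(T^2)$ in the complex case, which is indeed the small normalisation check one has to make.
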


\begin{proof}
By Lemma \ref{lemvol} and the result of Chern and Vaaler about the volumes of the sets defined in (\ref{ballr}) and (\ref{cball}), the volume of $Z(T)$ is $p(T^\frac{1}{2},\log T)$ where $p(X,Y)\in \R[X,Y]$, $\deg_X p \leq 2\en $, $\deg_Y p \leq q$ and the coefficient of $X^{2\en} Y^q$ is $\frac{\en^q }{q!} C_{\R,\en}^r C_{\C,\en}^s $.
\end{proof}

Therefore, recalling $|\Delta_\ks|$ and $\lambda_1, \dots , \lambda_{mn}$ are greater than or equal to 1, by (\ref{eqvolq0}) and Corollary \ref{corvol}, (\ref{extim}) becomes
\begin{equation}\label{extim2}
\left| |Z(T)\cap \Lambda|-\frac{\en^q 2^{s\en}}{q!\left(  \sqrt{|\Delta_\ks |}\right)^{\en}} C_{\R,\en}^r C_{\C,\en}^s T^\en \left( \log T \right)^q \right|\leq \sum_{j=0}^{{mn}-1}C V_j(Z(T))+Q(T),
\end{equation}
for every $T> 1$, where $Q(T)$ is the function of $T$ obtained from the polynomial $P$ of (\ref{eqvolq0}) or (\ref{eqvolq1}) substituting the coefficients with their absolute values. Note that $Q$ depends only on $\m$ and $\en$.

Now we want to find a bound for $V_j(Z(T))$. Recall that in (\ref{ineqnorm}) and (\ref{ineqnorm2}) we have defined a function $N(\x)= \gamma |(1,\x)|_\infty$ such that $N(\x)\leq \M(\x)$. Let 
$$
Z'(T)=\left\lbrace (\bo{x}_1, \ldots , \bo{x}_{r+s}) \in \R^{mn} : \prod_{i=1}^{r+s} N(\x_i)^{d_i}\leq T \right\rbrace .
$$
Each $(\bo{x}_1, \ldots , \bo{x}_{r+s})$ with $\prod_{i=1}^{r+s} \M(\bo{x}_i)^{d_i}\leq T $ satisfies $\prod_{i=1}^{r+s} N(\x_i)^{d_i}\leq T $. Therefore, we have $Z(T) \subseteq Z'(T)$ and $V_j(Z(T))\leq V_j(Z'(T))$.

Suppose $q=0$. This means that $\ks$ is either $\Q$ ($m=1$) or an imaginary quadratic field ($m=2$). In any case any projection of $Z'(T)$ to a $j$-dimensional coordinate subspace has volume $ \left(  \frac{2}{\gamma} \right)^j T^{\frac{j}{\m}} $ if $T\geq  \gamma^\m$, for every $j=1,\dots {mn}-1$. Therefore we obtain 
\begin{equation}\label{vol1}
V_j(Z(T)) \leq V_j\left(Z'(T)\right)\leq E T^{ \en -\frac{1}{\m}} ,
\end{equation}
for some real constant $E$ depending only on $\en$ and $\m$. This holds for every $T> 1$ since $\gamma \leq 1$.

Now suppose $q>0$.

\begin{lemma}\label{lemvolZ'}

For every $j=1, \dots , {mn}-1$, there exists $P_j(X,Y) \in \R [X,Y]$ whose coefficients depend only on $\m$ and $\en$, with $\deg_X P_j \leq 2 \en$, $\deg_Y P_j \leq q$, and the coefficient of $X^{2\en} Y^q$ is 0, such that, for every $T> 1$, we have
\begin{equation*}\label{vol2}
V_j(Z'(T))= P_j\left(T^{\frac{1}{2}}, \log T \right).
\end{equation*}

\end{lemma}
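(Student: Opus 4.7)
The plan is to exploit the box-product structure of $Z'(T)$. A $j$-dimensional coordinate subspace of $\R^{\m\en}=\prod_{i=1}^{r+s}\R^{d_i\en}$ is specified by choosing subsets $S_i\subseteq\{1,\dots,d_i\en\}$ with $\sum_i|S_i|=j$; I will write $n_i'=|S_i|$, let $I=\{i:n_i'\geq 1\}$, and split each $\x_i$ as $(\x_i',\x_i'')$ along this decomposition. Since $N(\x_i)=\gamma\max\{1,|\x_i|_\infty\}$ is minimized over $\x_i''$ by setting $\x_i''=\bo{0}$ (giving $N(\x_i)=\gamma\max\{1,|\x_i'|_\infty\}$, and just $\gamma$ when $i\notin I$), my first step is to establish that the corresponding projection of $Z'(T)$ equals
$$
\pi_S(Z'(T))=\left\{(\x_i')_{i\in I}:\prod_{i\in I}\widetilde N_i(\x_i')^{d_i}\leq T\gamma^{-\sum_{i\notin I}d_i}\right\},
$$
with $\widetilde N_i(\x_i')=\gamma\max\{1,|\x_i'|_\infty\}$ on $\R^{n_i'}$.

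Next I plan to compute the volume of the basic ``ball'' $\{\widetilde N_i\leq T_i\}$: for $T_i\geq 1\geq\gamma$ it is the box $[-T_i/\gamma,T_i/\gamma]^{n_i'}$, with volume $(2/\gamma)^{n_i'}T_i^{n_i'}$, a polynomial in $T_i$ of degree $n_i'$ with leading coefficient $(2/\gamma)^{n_i'}$. This places me exactly in the setting of Lemma \ref{lemvol}, but with the index set $I$ in place of $\{1,\dots,r+s\}$ and with $T$ replaced by $T':=T\gamma^{-\sum_{i\notin I}d_i}$. Since $\gamma\leq 1$, one has $T'\geq T\geq 1$, so the hypothesis of that lemma is met, and I obtain $\Vol(\pi_S(Z'(T)))=\widetilde p((T')^{1/2},\log T')$ for a polynomial $\widetilde p$ with $\deg_X\widetilde p\leq 2\en$ and $\deg_Y\widetilde p\leq|I|-1$. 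Because $\log T'$ differs from $\log T$ by a constant depending only on $\m$ and $\en$, rewriting this as a polynomial in $T^{1/2}$ and $\log T$ preserves the degree bounds and the dependence of coefficients only on $\m$ and $\en$.

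To see that the coefficient of $X^{2\en}Y^q$ vanishes, I will split into two cases. If $|I|<r+s$ then $\deg_Y\leq|I|-1<q$ and the claim is immediate. If $|I|=r+s$ then all blocks contribute, but the bound $j<\m\en$ forces $n_i'<d_i\en$ for at least one $i$, which is precisely the configuration in which the last statement of Lemma \ref{lemvol} yields the vanishing of the $X^{2\en}Y^{q}$ coefficient. Summing $\Vol(\pi_S(Z'(T)))$ over the finitely many $j$-element coordinate subsets $S$ of $\R^{\m\en}$ then produces $P_j$ with the required properties.

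I expect the main technical subtlety to be the handling of indices $i$ with $n_i'=0$, for which Lemma \ref{lemvol} does not apply directly (its hypothesis requires $n_i\geq 1$, and our $\widetilde N_i$ would reduce to the constant $\gamma<1$). The remedy, already built into the plan above, is to absorb those blocks into the right-hand side of the defining inequality via the rescaling $T\mapsto T\gamma^{-\sum_{i\notin I}d_i}$, thereby reducing to a legitimate application of the lemma with $|I|$ blocks. Verifying that this reduction is valid for every $S$ uniformly, and carefully tracking how the rescaling affects the $Y$-degrees and the $X^{2\en}Y^q$ coefficient, will be the only steps requiring real care.
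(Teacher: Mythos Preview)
Your approach is essentially the same as the paper's: both observe that the orthogonal projection of $Z'(T)$ onto a $j$-dimensional coordinate subspace coincides with its intersection with that subspace (since setting the complementary coordinates to zero minimizes each $N(\x_i)$), and both then invoke Lemma~\ref{lemvol} on the active blocks $i\in I$ after absorbing the inactive blocks into a rescaling of $T$. Your write-up is in fact more explicit than the paper's, both in handling the $n_i'=0$ indices and in the case split $|I|<r+s$ versus $|I|=r+s$ for the vanishing of the $X^{2\en}Y^q$ coefficient; the paper compresses all of this into a couple of sentences.

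There is one small technical slip to fix. Your functions $\widetilde N_i(\x_i')=\gamma\max\{1,|\x_i'|_\infty\}$ take values in $[\gamma,\infty)$, not in $[1,\infty)$, so Lemma~\ref{lemvol} does not apply to them as stated (its proof uses $f_i\geq 1$ in the integration limits and in the point mass at $1$). The paper handles this by ``dividing by $\gamma$'', i.e.\ passing to $f_i(\x_i')=\max\{1,|\x_i'|_\infty\}$, at the cost of a further rescaling of the parameter to $T/\gamma^{\m}$. With that normalization (which only shifts $\log T$ by another constant and multiplies $T^{1/2}$ by another constant, so your degree and leading-coefficient arguments are unaffected), your plan goes through verbatim.
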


\begin{proof}
By definition, the projection of $Z'(T)$ on a $j$-dimensional coordinate subspace is just the intersection of $Z'(T)$ with such subspace. To each such subspace $\Sigma$ we can associate integers $n_1,\dots ,\n_{r+s}$ with $0\leq n_i \leq d_in$ such that $\Sigma$ is defined by setting $d_i n-n_i$ coordinates of each $\x_i$ to 0. Therefore we are in the situation of Lemma \ref{lemvol} because, after dividing by $\gamma$, we have, for every $i$ such that $n_i>0$, a continuous function  $f_i : \R^{n_i} \rightarrow [1, \infty)$, with $\sum n_i =j$. This gives rise to sets of the form (\ref{rball}), whose volumes are $2^{n_i}T^{n_i}$. Since $j<{mn}$, not all $n_i$ can be equal to $d_i n$. Therefore, by Lemma \ref{lemvol}, the volume of any such projection equals a polynomial with the desired property and we have the claim.
\end{proof}

Recall the definition of $\cM^\ks(e,\cH)$ that was given in Section \ref{sect2}. Clearly  $\left| \cM^\ks(e,\cH)\right|$ is the number of $\bo{a} \in \Oseen_\ks^\e$ with $\prod_{i=1}^{r+s}\M\left(\sigma_i(\bo{a})\right)^{d_i}\leq \cH^\m $, i.e., $|Z(\cH^\m)\cap \Oseen_\ks^\e|$.

By (\ref{extim2}), (\ref{vol1}) and Lemma \ref{lemvolZ'} we have, for every $\cH> 1$,
\begin{equation*}
\left| \left| \cM^\ks(e,\cH)\right|-\frac{\e^q \m^q 2^{s\e}}{q!\left(  \sqrt{|\Delta_\ks |}\right)^{\e}} C_{\R,\e}^r C_{\C,\e}^s \cH^{\m \e} \left( \log \cH \right)^q \right|\leq E(\cH),
\end{equation*}
with
\begin{equation*}
E(\cH)=\left\lbrace
\begin{array}{ll}
\sum_{i=0}^{2  \e }\sum_{j=0}^q E_{i,j} \cH^{\frac{\m i}{2}}(\log \cH )^j,  & \mbox{ if $q\geq 1$, }\\
\sum_{i=0}^{\m \e-1} E_i \cH^i, & \mbox{ if $q=0$, }
\end{array} \right.
\end{equation*}
where $E_{2  \e ,q}=0$ and all the coefficients depend on $\m$ and $\e$. 

Finally, it is clear that for every $\cH_0>1$ one can find a $D_0$ such that, for every $\cH \geq \cH_0$,
\begin{equation*}
E(\cH)\leq \left\lbrace
\begin{array}{ll}
D_0    \mc{H}^{\m\e} \left( \log  \mc{H}  \right)^{q-1} ,  & \mbox{ if $q\geq 1$, }\\
D_0 \mc{H}^{\m\e-1}  , & \mbox{ if $q=0$, }
\end{array} \right.
\end{equation*}
and we derive the claim of Theorem \ref{mainthm}.

\section*{Acknowledgments}

The author would like to thank Martin Widmer for sharing his ideas, for his constant encouragement and advice, Giulio Peruginelli, Robert Tichy and Jeffrey Vaaler for many useful discussions and the anonymous referee for providing valuable suggestions.

\bibliographystyle{amsplain}
\bibliography{bibliography.bib}

\def\cprime{$'$}
\providecommand{\bysame}{\leavevmode\hbox to3em{\hrulefill}\thinspace}
\providecommand{\MR}{\relax\ifhmode\unskip\space\fi MR }
\providecommand{\MRhref}[2]{%
  \href{http://www.ams.org/mathscinet-getitem?mr=#1}{#2}
}
\providecommand{\href}[2]{#2}
\begin{thebibliography}{10}

\bibitem{Barroero2012}
F.~Barroero and M.~Widmer, \emph{Counting lattice points and o-minimal
  structures}, to appear in Int. Math. Res. Not. IMRN.

\bibitem{Bierstone88}
E.~Bierstone and P.~D. Milman, \emph{Semianalytic and subanalytic sets}, Inst.
  Hautes \'Etudes Sci. Publ. Math. (1988), no.~67, 5--42.

\bibitem{BombGub}
E.~Bombieri and W.~Gubler, \emph{Heights in {D}iophantine {G}eometry}, New
  Mathematical Monographs, vol.~4, Cambridge University Press, Cambridge, 2006.

\bibitem{Chern2001}
S.~Chern and J.~D. Vaaler, \emph{The distribution of values of {M}ahler's
  measure}, J. reine angew. Math. \textbf{540} (2001), 1--47.

\bibitem{Davenport1951}
H.~Davenport, \emph{On a principle of {L}ipschitz}, J. London Math. Soc.
  \textbf{26} (1951), 179--183.

\bibitem{Gao1995}
X.~Gao, \emph{On {N}orthcott's {T}heorem}, Ph.D. Thesis, University of Colorado
  (1995).

\bibitem{Lang1983}
S.~Lang, \emph{Fundamentals of {D}iophantine {G}eometry}, Springer-Verlag, New
  York, 1983.

\bibitem{Mahler1961}
K.~Mahler, \emph{On the zeros of the derivative of a polynomial}, Proc. Roy.
  Soc. Ser. A \textbf{264} (1961), 145--154.

\bibitem{Masser2007}
D.~Masser and J.~D. Vaaler, \emph{Counting algebraic numbers with large height.
  {II}}, Trans. Amer. Math. Soc. \textbf{359} (2007), no.~1, 427--445.

\bibitem{Northcott1949}
D.~G. Northcott, \emph{An inequality in the theory of arithmetic on algebraic
  varieties}, Proc. Cambridge Philos. Soc. \textbf{45} (1949), 502--509.

\bibitem{Rudin}
W.~Rudin, \emph{Real and {C}omplex {A}nalysis}, McGraw-Hill Book Co., New York,
  1966.

\bibitem{Schanuel1979}
S.~H. Schanuel, \emph{Heights in number fields}, Bull. Soc. Math. France
  \textbf{107} (1979), no.~4, 433--449.

\bibitem{Schmidt1993}
W.~M. Schmidt, \emph{Northcott's theorem on heights. {I}. {A} general
  estimate}, Monatsh. Math. \textbf{115} (1993), no.~1-2, 169--181.

\bibitem{schmidt1995}
\bysame, \emph{Northcott's theorem on heights {II}. {T}he quadratic case}, Acta
  Arith. \textbf{LXX.4} (1995), 343--375.

\bibitem{Widmer2013}
M.~Widmer, \emph{Integral points of fixed degree and bounded height},
  submitted.

\bibitem{Widmer2009}
\bysame, \emph{Counting points of fixed degree and bounded height}, Acta Arith.
  \textbf{140} (2009), no.~2, 145--168.

\end{thebibliography}

\end{document}